\documentclass{amsart}

\usepackage{amssymb}
\usepackage{amscd}
\usepackage{amsfonts}
\usepackage{version}

\numberwithin{equation}{section}

\theoremstyle{plain}

\newtheorem{theorem}[subsection]{Theorem}

\newtheorem{lemma}[subsection]{Lemma}
\newtheorem{corollary}[subsection]{Corollary}
\newtheorem{conjecture}[subsection]{Conjecture}

\newtheorem{question}[subsection]{Question}

\theoremstyle{definition}

\newtheorem{cheat}[subsection]{Cheat}


\renewcommand{\leq}{\leqslant}
\renewcommand{\geq}{\geqslant}

\newsavebox{\proofbox}
\savebox{\proofbox}{\begin{picture}(7,7)%
  \put(0,0){\framebox(7,7){}}\end{picture}}







\newcommand\E{\mathbb{E}}
\newcommand\Z{\mathbb{Z}}
\newcommand\R{\mathbb{R}}

\newcommand\C{\mathbb{C}}

\newcommand\D{\mathcal{D}}

\newcommand\J{\mathcal{J}}

\def\J{\mathbf{J}}
\newcommand\lo{\operatorname{LO}}
\newcommand\cs{\operatorname{CS}}

\newcommand\Symb{\operatorname{Symb}}

\newcommand\eps{\varepsilon}

\newcommand\id{\operatorname{id}}

\newcommand\GI{{\operatorname{GI}}}

\begin{document}
\title{An inverse theorem for the Gowers $U^{s+1}[N]$-norm}
\author{Ben Green}
\address{Centre for Mathematical Sciences\\
Wilberforce Road\\
Cambridge CB3 0WA\\
England }
\email{b.j.green@dpmms.cam.ac.uk}
\author{Terence Tao}
\address{Department of Mathematics\\
UCLA\\
Los Angeles, CA 90095\\
USA}
\email{tao@math.ucla.edu}
\author{Tamar Ziegler}
\address{Department of Mathematics \\
Technion - Israel Institute of Technology\\
Haifa, Israel 32000}
\email{tamarzr@tx.technion.ac.il}
\subjclass{}

\begin{abstract} This is an announcement of the proof of the \emph{inverse conjecture for the Gowers $U^{s+1}[N]$-norm} for all $s \geq 3$; this is new for $s \geq 4$, the cases $s = 1,2,3$ having been previously established. More precisely we outline a proof that if $f : [N] \rightarrow [-1,1]$ is a function with $\Vert f \Vert_{U^{s+1}[N]} \geq \delta$ then there is a bounded-complexity $s$-step nilsequence $F(g(n)\Gamma)$ which correlates with $f$, where the bounds on the complexity and correlation depend only on $s$ and $\delta$. From previous results, this conjecture implies the Hardy-Littlewood prime tuples conjecture for any linear system of finite complexity. In particular, one obtains an asymptotic formula for the number of $k$-term arithmetic progressions $p_1 < p_2 < \dots < p_k \leq N$ of primes, for every $k \geq 3$. 
\end{abstract}

\maketitle

\section{Introduction}

This is an announcement and summary of our much longer paper \cite{gtz-inverse}, the purpose of which is to establish the general case of the \emph{Inverse Conjecture for the Gowers norms}, conjectured by the first two authors in \cite[Conjecture 8.3]{green-tao-linearprimes}. If $N$ is a (typically large) positive integer then we write $[N] := \{1,\dots,N\}$. Throughout the paper we write $\D = \{z \in \C: |z| \leq 1\}$. For each integer $s \geq 1$ the inverse conjecture $\GI(s)$, whose statement we recall shortly, describes the structure of functions $f : [N] \rightarrow \D$ whose $(s+1)^{\operatorname{st}}$ Gowers norm $\Vert f \Vert_{U^{s+1}[N]}$ is large. These conjectures together with a good deal of motivation and background to them are discussed in \cite{green-icm,green-tao-u3inverse,green-tao-linearprimes}. The conjectures $\GI(1)$ and $\GI(2)$ have been known for some time, the former being a straightforward application of Fourier analysis, and the latter being the main result of \cite{green-tao-u3inverse} (see also \cite{sam} for the characteristic $2$ analogue).  The case $\GI(3)$ was also recently established by the authors in \cite{u4-inverse}. In this note we announce the  resolution of the remaining cases $\GI(s)$ for $s \geq 3$, in particular reproving the results in \cite{u4-inverse}.
 
We begin by recalling the definition of the Gowers norms. If $G$ is a finite abelian group, $d \geq 1$ is an integer, and $f : G \rightarrow \C$ is a function then we define
\begin{equation}\label{ukdef}
 \Vert f \Vert_{U^{d}(G)} := \left(  \E_{x,h_1,\dots,h_k \in G} \Delta_{h_1} \ldots \Delta_{h_d} f(x)\right)^{1/2^d},
\end{equation}
where $\Delta_h f$ is the multiplicative derivative
$$ \Delta_h f(x) := f(x+h) \overline{f(x)}$$
and $\E_{x \in X} f(x) := \frac{1}{|X|} \sum_{x \in X} f(x)$ denotes the average of a function $f: X \to \C$ on a finite set $X$. Thus for instance we have
\[ 
\Vert f \Vert_{U^2(G)} := \left( \E_{x,h_1,h_2 \in G} f(x) \overline{f(n+h_1) f(n+h_2)} f(n+h_1 + h_2)\right)^{1/4}.
\]
One can show that $U^d(G)$ is indeed a norm on the functions $f: G \to \C$ for any $d \geq 2$, though we will not need this fact here.

In this paper we will be concerned with functions on $[N]$, which is not quite a group. To define the Gowers norms of a function $f : [N] \rightarrow \C$, set $G := \Z/\tilde N\Z$ for some integer $\tilde N \geq 2^d N$, define a function $\tilde f : G \rightarrow \C$ by $\tilde f(x) = f(x)$ for $x = 1,\dots,N$ and $\tilde f(x) = 0$ otherwise and set 
\[ \Vert f \Vert_{U^d[N]} := \Vert \tilde f \Vert_{U^d(G)} / \Vert 1_{[N]} \Vert_{U^d(G)},\] where $1_{[N]}$ is the indicator function of $[N]$.  It is easy to see that this definition is independent of the choice of $\tilde N$. One could take $\tilde N := 2^d N$ for definiteness if desired. 

The \emph{Inverse conjecture for the Gowers $U^{s+1}[N]$-norm}, abbreviated as $\GI(s)$, posits an answer to the following question.

\begin{question}
Suppose that $f : [N] \rightarrow \D$ is a function and let $\delta > 0$ be a positive real number. What can be said if $\Vert f \Vert_{U^{s+1}[N]} \geq \delta$?
\end{question}

Note that in the extreme case $\delta = 1$ one can easily show that $f$ is a phase polynomial, namely  $f(n)=e(P(n))$ for some polynomial $P$ of degree at most $s$.  Furthermore, if $f$ correlates with a phase polynomial, that is to say if $|\E_{n \in [N]} f(n) \overline{e( P(n))}| \geq \delta$, then it is easy to show that
$\Vert f \Vert_{U^{s+1}[N]} \geq c(\delta)$. It is natural to ask whether the converse is also true - does a large Gowers norm imply correlation with a polynomial phase function?
Surprisingly, the answer is no, as was observed by Gowers \cite{gowers-4aps} and, in the related context of \emph{multiple recurrence}, somewhat earlier by Furstenberg and Weiss \cite{furst, fw-char}. The work of Furstenberg and Weiss draws attention to the role of homogeneous spaces $G/\Gamma$ of nilpotent Lie groups, and subsequent work of Host and Kra \cite{host-kra} provides a link, in an ergodic-theoretic context, between these spaces and certain seminorms with a formal similarity to the Gowers norms under discussion here. Later work of Bergelson, Host and Kra \cite{bhk} highlights the role of a class of functions arising from these spaces $G/\Gamma$ called \emph{nilsequences}. The inverse conjecture for the Gowers norms, first formulated precisely in \S 8 of \cite{green-tao-linearprimes}, postulates that this class of functions (which contains the polynomial phases) represents the full set of obstructions to having large Gowers norm. 

Here is that precise formulation of the conjecture. Recall that an \emph{$s$-step nilmanifold} is a manifold of the form $G/\Gamma$, where $G$ is a connected, simply-connected nilpotent Lie group of step at most $s$ (i.e. all $(s+1)$-fold commutators of $G$ are trivial), and $\Gamma$ is a lattice (a discrete cocompact subgroup of $G$).

\begin{conjecture}[$\GI(s)$]\label{gis-conj}  Let $s \geq 0$ be an integer, and let $0 < \delta \leq 1$.   Then there exists a finite collection ${\mathcal M}_{s,\delta}$ of $s$-step nilmanifolds $G/\Gamma$, each equipped with some smooth Riemannian metric $d_{G/\Gamma}$ as well as constants $C(s,\delta), c(s,\delta) > 0$ with the following property. Whenever $N \geq 1$ and $f : [N] \rightarrow \D$ is a function such that $\Vert f \Vert_{U^{s+1}[N]} \geq \delta$, there exists a nilmanifold $G/\Gamma \in {\mathcal M}_{s,\delta}$, some $g \in G$ and a function $F: G/\Gamma \to \D$ with Lipschitz constant at most $C(s,\delta)$ with respect to the metric $d_{G/\Gamma}$, such that
$$ |\E_{n \in [N]} f(n) \overline{F(g^n x)}| \geq c(s,\delta).$$
\end{conjecture}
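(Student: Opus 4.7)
The plan is induction on $s$, the cases $s \leq 3$ being already available. Assume $\GI(s-1)$ and let $f : [N] \to \D$ satisfy $\Vert f \Vert_{U^{s+1}[N]} \geq \delta$. The recursive identity
\[ \Vert f \Vert_{U^{s+1}[N]}^{2^{s+1}} \asymp \E_h \Vert \Delta_h f \Vert_{U^s[N]}^{2^s} \]
(valid up to boundary losses from embedding $[N]$ into a cyclic group) forces a $\gg \delta^{O(1)}$-proportion of shifts $h$ to satisfy $\Vert \Delta_h f \Vert_{U^s[N]} \gg \delta^{O(1)}$. The inductive hypothesis then supplies, for each such $h$, an $(s-1)$-step nilsequence $F_h(g_h^n \Gamma_h)$ correlated with $\Delta_h f$, with data drawn from a finite list determined only by $\delta$.

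Pigeonholing on this finite list reduces us to a single nilmanifold $G/\Gamma$, and after discretising $F_h$ in a suitable norm we may further take $F_h = F$ for a single Lipschitz template, leaving a map $h \mapsto g_h \in G$ defined on a dense set $H \subset [-N,N]$. Using the quantitative equidistribution theory for polynomial sequences on nilmanifolds, we replace each $g_h^n$ by a polynomial sequence $g_h(n)$ adapted to the lower central series filtration of $G$. The task is then to extract enough joint structure in the pair $(n,h)$ from the bound
\[ \left| \E_h \E_n \Delta_h f(n) \overline{F(g_h(n)\Gamma)} \right| \gg 1 \]
to rebuild a single $s$-step nilsequence in $n$ alone.

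The principal obstacle is to upgrade $h \mapsto g_h$ from merely measurable into a genuinely \emph{polynomial map in $h$} of degree $\leq s$, obeying a symmetry condition reflecting the commutativity of mixed Gowers derivatives. Following the strategy of the $\GI(3)$ paper \cite{u4-inverse}, we differentiate again in auxiliary shifts $h', h''$, apply further Cauchy--Schwarz and pigeonhole, and extract approximate cocycle identities $g_{h_1 + h_2} \approx g_{h_1} g_{h_2}$ modulo lower strata of the central series, valid on many additive quadruples $h_1 + h_2 = h_3 + h_4$. Combined with the full permutation symmetry of $\Vert f \Vert_{U^{s+1}[N]}$ in its $s+1$ difference parameters, these impose a closedness condition on the $G$-valued cocycle, which a cohomological argument trivialises layer-by-layer up the lower central series. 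The outcome is a polynomial map $\tilde g : \Z^2 \to G$, symmetric in an appropriate sense, such that $F(\tilde g(n,h)\Gamma)$ still correlates on average in $h$ with $\Delta_h f(n)$.

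Finally, the symmetry condition permits $\tilde g(n,h)$ to be factored as $G(n+h) G(n)^{-1}$ for a polynomial sequence $G$ taking values in a suitable extension $\tilde G$ of $G$ (an $s$-step nilpotent group, constructed in the spirit of Host--Kra). Substituting this factorisation and untangling the correlation exhibits $f$ itself as correlating with the $s$-step nilsequence $F'(G(n)\tilde\Gamma)$ for an explicit Lipschitz $F'$, which closes the induction.
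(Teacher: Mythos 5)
Your high-level skeleton matches the paper's: induct on $s$, pass to derivatives $\Delta_h f$, apply $\GI(s-1)$ to get a family of $(s-1)$-step objects $\chi_h$ for many $h$, exploit an approximate cocycle relation on additive quadruples coming from Cauchy--Schwarz, linearise the $h$-dependence into a genuine two-variable nilobject, establish a symmetry in the two variables, and then ``integrate'' to produce an $s$-step nilsequence. This is the right shape of argument, and the setup up to the Gowers Cauchy--Schwarz step is essentially the same as the paper's Lemma~\ref{gow}.

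However, there is a genuine gap at the heart of the proposal: the linearisation step. You write that one should ``differentiate again in auxiliary shifts, apply further Cauchy--Schwarz and pigeonhole, and extract approximate cocycle identities $g_{h_1+h_2} \approx g_{h_1} g_{h_2}$ modulo lower strata of the central series'', and that a ``cohomological argument trivialises'' this ``layer-by-layer''. This is a statement of the goal, not a method. What Cauchy--Schwarz actually yields is only a correlation statement about four-fold products $\chi_{h_1}\chi_{h_2}\overline{\chi_{h_3}\chi_{h_4}}$ on additive quadruples (Corollary~\ref{approxa}); promoting this to a polynomial (indeed degree-one, not degree $s$) dependence of the underlying group elements on $h$ is precisely the hard content of Theorem~\ref{linear}, which consumes the bulk of \cite{gtz-inverse}. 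The machinery required --- the reduction from nilsequences to \emph{nilcharacters} (so that magnitude-one and vertical-frequency properties become available), the sunflower decomposition separating core from petal frequencies, the Furstenberg--Weiss equidistribution argument on products of nilmanifolds ruling out two-petal monomials, and the Balog--Szemer\'edi--Gowers/Fre\u{\i}man input to establish bracket-linearity in $h$ --- is entirely absent from your sketch, and none of it is replaced by an alternative device. There is no ``cohomology of the lower central series'' that does this for free; if there were, the five-ingredient programme in Section~\ref{approx-lin-sec} would be unnecessary. A secondary but related issue is your early reduction to a single Lipschitz template $F_h = F$: the paper instead decomposes into nilcharacters and pigeonholes on vertical frequency, and this extra structure is not cosmetic --- it is exactly what makes the equidistribution arguments in steps (iii) and (iv) run.

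Two smaller inaccuracies: the target bi-degree is $(1,s-1)$ in $(h,n)$ --- linear in $h$ --- not ``polynomial of degree $\leq s$'' in $h$; and the symmetrisation in the paper is obtained by revisiting the \emph{lower-order} terms of Lemma~\ref{gow} after linearisation, not by appealing abstractly to the permutation symmetry of the $U^{s+1}$ norm in its difference parameters. Your closing factorisation $\tilde g(n,h) = G(n+h)G(n)^{-1}$ is morally what the integration step does, but it is unavailable until the symmetry has actually been extracted, and the extension group is produced by the semidirect-product construction of step (v), not taken off the shelf.
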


Let us briefly review the known partial results on this conjecture (in no particular order):  
\begin{enumerate}
\item $\GI(0)$ is trivial.
\item $\GI(1)$ follows from a short Fourier-analytic computation.
\item $\GI(2)$ was established five years ago in \cite{green-tao-u3inverse}, building on work of Gowers \cite{gowers-4aps}.
\item $\GI(3)$ was established, quite recently, in \cite{u4-inverse}.
\item In the extreme case $\delta = 1$ one can easily show that $f(n)=e(P(n))$ for some polynomial $P$ of degree at most $s$, and every such function \emph{is} an $s$-step nilsequence by a direct construction. See, for example, \cite{green-tao-u3inverse} for the case $s = 2$. 
\item In the almost extremal case $\delta \geq 1- \eps_s$, for some $\eps_s > 0$, one may see that $f$ correlates with a phase $e(P(n))$ by adapting arguments first used in the theoretical computer-science literature \cite{akklr}.
\item The analogue of $\GI(s)$ in ergodic theory (which, roughly speaking, corresponds to the asymptotic limit $N \to \infty$ of the theory here; see \cite{host-kra-uniformity} for further discussion) was formulated and established in \cite{host-kra}, work done independently of the work of Gowers (see also the earlier paper \cite{hk1}). This work was the first place in the literature to link objects of Gowers-norm type (associated to functions on a measure-preserving system $(X, T,\mu)$) with flows on nilmanifolds, and the subsequent paper \cite{bhk} was  the first work to underline the importance of \emph{nilsequences}. The formulation of $\GI(s)$ by the first two authors in \cite{green-tao-linearprimes} was very strongly influenced by these works. For the closely related problem of analysing multiple ergodic averages, the relevance of flows on nilmanifolds was earlier pointed out in  \cite{furst, fw-char,lesigne-nil}, building upon earlier work in  \cite{conze}.  See also \cite{hk0,ziegler} for related work on multiple averages and nilmanifolds in ergodic theory.
\item The analogue of $\GI(s)$ in finite fields of large characteristic was established by ergodic-theoretic methods in \cite{bergelson-tao-ziegler,tao-ziegler}.
\item A weaker ``local'' version of the inverse theorem (in which correlation takes place on a subprogression of $[N]$ of size $\sim N^{c_s}$) was established by Gowers \cite{gowers-longaps}. This paper provided a good deal of inspiration for our work here.
\item The converse statement to $\GI(s)$, namely that correlation with a function of the form $n \mapsto F(g^n x)$ implies that $f$ has large $U^{s+1}[N]$-norm, is also known. This was first established in \cite[Proposition 12.6]{green-tao-u3inverse}, following arguments of Host and Kra \cite{host-kra} rather closely. A rather simple proof of this result is given in \cite[Appendix G]{u4-inverse}. 
\end{enumerate}

The aim of this announcement is to outline an argument for the general case. Details may be found in the much longer paper \cite{gtz-inverse}.

\begin{theorem}\label{mainthm}  For any $s \geq 3$,
The inverse conjecture for the $U^{s+1}[N]$-norm, $\GI(s)$, is true.
\end{theorem}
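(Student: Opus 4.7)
The plan is to induct on $s$, using the established base cases $s \in \{1,2,3\}$. Assume $\GI(s-1)$ holds; I prove $\GI(s)$. Given $f : [N] \to \D$ with $\|f\|_{U^{s+1}[N]} \geq \delta$, the defining identity for the Gowers norms yields
\[
\|f\|_{U^{s+1}[N]}^{2^{s+1}} \; \approx \; \E_{h} \|\Delta_h f\|_{U^s[N]}^{2^s},
\]
so by Markov's inequality there is a set $H \subseteq [-N,N]$ of density $\gg_\delta 1$ on which $\|\Delta_h f\|_{U^s[N]} \geq \delta'$ for some $\delta' = \delta'(s,\delta) > 0$.

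For each $h \in H$ I apply $\GI(s-1)$ to $\Delta_h f$, obtaining an $(s-1)$-step nilmanifold $G_h/\Gamma_h \in {\mathcal M}_{s-1,\delta'}$, an element $g_h$, a base-point $x_h$, and a Lipschitz function $F_h$ of bounded complexity with $\Delta_h f$ correlating with $n \mapsto F_h(g_h^n x_h)$. Pigeonholing in the finite family ${\mathcal M}_{s-1,\delta'}$ (and in a suitable discretization of the $F_h$'s) reduces to a common nilmanifold $G/\Gamma$ and a common test function $F$, so the essential data is an assignment $h \mapsto g_h$ of "derivative frequencies" defined on a dense set. Using the Mal'cev-Leibman machinery I rewrite $n \mapsto g_h^n$ as a polynomial sequence of degree at most $s-1$ in $n$.

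The heart of the proof, and main obstacle, is a \emph{symmetry argument} which forces $h \mapsto g_h$ to itself be a polynomial of degree at most $s-1$ in $h$, modulo a bounded-complexity error. The input is that $\Delta_{h_1}\Delta_{h_2} f = \Delta_{h_2}\Delta_{h_1} f$, so the two $(s-2)$-step correlations obtained by differentiating in the two orders must agree. Combined with the quantitative equidistribution theorem for polynomial sequences on nilmanifolds, this forces an approximate cocycle identity $g_{h_1+h_2} \equiv g_{h_1} g_{h_2}$ up to a lower-order term, realized on an enlarged multidegree nilmanifold that keeps track of both $h$ and $n$. Iterating this approximate-cocycle analysis and repeatedly applying a factorization theorem to convert rough polynomial data into smooth polynomial data upgrades, step by step, the merely measurable dependence of $g_h$ on $h$ into polynomial dependence of degree $s-1$. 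This is where virtually all the technical work of \cite{gtz-inverse} lives; it is the finitary, quantitative analogue of the Host-Kra / Bergelson-Host-Kra construction of $s$-step characteristic factors.

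Once $h \mapsto g_h$ is polynomial of degree $s-1$, the combined map $(h,n) \mapsto g_h^n$ is a polynomial of multidegree $(s-1,1)$ into $G$, so by a Furstenberg-Weiss style construction, restriction to the diagonal $h = n$ (followed by a further application of $\GI(s-1)$ to control an error) assembles into a single polynomial sequence of degree $s$ on an $s$-step nilmanifold $G'/\Gamma'$, whose trace correlates with $f$ itself. This integration step gains one degree and is relatively routine after the symmetry argument has been executed. The principal difficulty throughout is the quantitative implementation: every qualitative ergodic-theoretic step must be replaced by an explicit pigeonhole-plus-equidistribution argument with effective bounds depending only on $s$ and $\delta$.
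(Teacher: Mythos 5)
Your high-level strategy—induct on $s$, apply $\GI(s-1)$ to the derivatives $\Delta_h f$, extract additive structure from the family of $(s-1)$-step correlates $\chi_h$, then ``integrate'' to obtain an $s$-step nilcharacter correlating with $f$—is indeed the strategy of the paper, and is the same one used for $s=2,3$ in \cite{green-tao-u3inverse,u4-inverse}. But the middle of your argument, where you say ``virtually all the technical work lives,'' contains a claim that is wrong as stated and conflates two genuinely distinct steps.

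First, the degree claim. You assert that a symmetry argument forces $h \mapsto g_h$ to be ``polynomial of degree at most $s-1$ in $h$,'' yielding a map $(h,n) \mapsto g_h^n$ of multidegree $(s-1,1)$. The paper's Theorem \ref{linear} (Linearisation) says the opposite: the $h$-dependence is shown to be \emph{bracket-linear}, i.e.\ of degree $1$, and the resulting bi-nilcharacter $\chi(h,n)$ has bi-degree $(1,s-1)$—degree $1$ in $h$ and degree $s-1$ in $n$. Already in the $s=2$ model case one sees this: after Gowers' Balog--Szemer\'edi--Freiman analysis of the relation \eqref{xha}, the frequency is $\xi_h = \sum_j \alpha_j \lfloor \beta_j h\rfloor + \gamma + O(1/N)$, which is degree $1$ (bracket-linear) in $h$, not degree $1 = s-1$ by coincidence. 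For general $s$ the degree in $h$ stays $1$; what grows is the degree in $n$. Getting this right matters because the subsequent ``integration'' gains exactly one degree in $h$, i.e.\ it produces a bi-degree $(2,s-1)$ object whose diagonal restriction can be seen to be degree $s$.

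Second, you fold linearisation and symmetrisation into a single ``symmetry argument,'' but these are separate stages with different tools. Linearisation (Section \ref{approx-lin-sec}) starts from the approximate cocycle equation—obtained by a \emph{Cauchy--Schwarz} argument of Gowers (Lemma \ref{gow}), not literally from commutativity $\Delta_{h_1}\Delta_{h_2}f = \Delta_{h_2}\Delta_{h_1}f$—and requires the passage to \emph{nilcharacters} (rather than generic nilsequences), a secondary induction on rank, a sunflower decomposition of frequencies into ``core'' and ``petal'' parts, a Furstenberg--Weiss equidistribution argument to kill petal-petal interactions, additive combinatorics (Balog--Szemer\'edi--Gowers plus Freiman) to get bracket-linearity of the petal frequencies, and a semidirect-product construction to realize the result as a single bi-nilcharacter. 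None of that is visible in ``iterating the approximate-cocycle analysis and applying a factorization theorem.'' Symmetrisation is then a \emph{further} step: having a bi-degree $(1,s-1)$ bi-nilcharacter $\chi(h,n)$ is not sufficient—for example $\chi(h,n)=e(\alpha h\lfloor\beta n\rfloor)$ has no antiderivative—and one must establish an approximate symmetry of the underlying multilinear form before one can integrate. Your ``restriction to the diagonal $h=n$'' skips this obstruction; it would produce \emph{some} degree-$s$ object, but without the symmetry you have no reason for it to be an antiderivative of the $\chi_h$, which is what the argument needs.

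In short: right scaffolding, but the claimed polynomial degree of the $h$-dependence is wrong, the distinct linearisation and symmetrisation stages are collapsed, and the actual machinery (Cauchy--Schwarz cocycle lemma, nilcharacter reduction, sunflower/Furstenberg--Weiss/additive-combinatorics pipeline, and the symmetry-then-integrate endgame) is not identified.
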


By combining this result with the previous results in \cite{green-tao-linearprimes,green-tao-mobiusnilsequences} we obtain a quantitative Hardy-Littlewood prime tuples conjecture for all linear systems of finite complexity; in particular, we now have the expected asymptotic for the number of primes $p_1 < \ldots < p_k \leq N$ in arithmetic progression, for every fixed positive integer $k$.  We refer to \cite{green-tao-linearprimes} for further discussion, as we have nothing new to add here regarding these applications.  Several further applications of the $\GI(s)$ conjectures are given in \cite{fhk,green-tao-arithmetic-regularity}. 

We remark that an alternative strategy towards the inverse conjecture and related problems is currently being developed by Balazs Szegedy in an ongoing series of papers \cite{szeg-1,szeg-2,szeg-3}.  There are some similarities in method between these papers and ours, the most obvious being a reliance on nonstandard analysis to make the algebraic manipulations easier. In other respects the methods of Szegedy are closer to the ergodic theory methods of Host and Kra \cite{host-kra}, whereas ours are ultimately based on the Fourier-analytic methods of Gowers \cite{gowers-4aps,gowers-longaps}.  

In order to avoid some notational and technical difficulties, the presentation in this announcement will be non-rigorous, focusing on various model special cases and ignoring some fine distinctions.  We will indicate these non-rigorous simplifications throughout this paper as ``cheats''.\vspace{8pt}

\emph{Acknowledgements. } BG was, for some of the period during which this work was carried out, a fellow of the Radcliffe Institute at Harvard. He is very grateful to the Radcliffe Institute for providing excellent working conditions. TT is supported by NSF Research Award DMS-0649473, the NSF Waterman award and a grant from the MacArthur Foundation. TZ is supported by ISF grant  557/08, an Alon fellowship and a Landau fellowship of the Taub foundation. All three authors are very grateful to the University of Verona for allowing them to use classrooms at Canazei during a week in July 2009. This work was largely completed during that week.

\section{Reduction to an integration problem}

Our proof of $\GI(s)$ follows the strategy used to establish the $s=2$ case in \cite{green-tao-u3inverse} and the $s=3$ case in \cite{u4-inverse}, these methods in turn being based on the earlier arguments of Gowers \cite{gowers-4aps,gowers-longaps}.  In each case, one uses $\GI(s-1)$ as an induction hypothesis to assist in proving $\GI(s)$.  To pass from $\GI(s-1)$ to $\GI(s)$, one has to perform a ``cohomological'' task, namely that of showing that a certain ``cocycle'' is essentially a ``coboundary'' (or showing that a certain ``closed'' form is essentially ``exact'').  This cohomological task is by far the most difficult portion of the argument, and will be discussed in more detail in later sections. We focus for now on the reduction to that goal.

\begin{cheat}
It will be convenient to suppress dependence on parameters such as $\delta$, and instead use asymptotic notation such as $\ll$ or $O(1)$ liberally.  In the full paper \cite{gtz-inverse}, we will in fact use the language of nonstandard analysis to systematically suppress all of these parameters and make asymptotic notation such as this rigorous. Here, however,  we will avoid the use of this language and instead rely on more informal terminology such as ``bounded'' or ``large''.
\end{cheat}

Fix a positive integer $s \geq 3$ and assume $\GI(s-1)$ as an induction hypothesis. Our goal, of course, is to prove $\GI(s)$.  Suppose then that we have a function $f: [N] \to \D$ with $\|f\|_{U^{s+1}[N]} \gg 1$; our aim is to show that $f$ \emph{correlates} with some \emph{nilsequence} $\chi(n)$ of step $s$ in the sense that
$$ |\E_{n \in [N]} f(n) \overline{\chi(n)}| \gg 1.$$
Here $\chi(n)$ is a function of the form $F(g^n x)$, where $F$ is a Lipschitz function with bounded Lipschitz norm on an $s$-step nilmanifold $G/\Gamma$ chosen from a bounded list of possibilities, $g \in G$, and $x \in G/\Gamma$.  A simple example of an $s$-step nilsequence to keep in mind for now is $\chi(n) = e(\alpha n^s)$, where $\alpha \in \R/\Z$ and $e(x) := e^{2\pi ix}$ is the standard character. We caution however that this is not an especially representative example. Further examples will be discussed later on.

Using the identity
\begin{equation}\label{fasn}
 \| f\|_{U^{s+1}(\Z/\tilde N \Z)} = (\E_{h \in \Z/\tilde N \Z} \| \Delta_h f \|_{U^s(\Z/\tilde N\Z)}^{2^s})^{1/2^{s+1}},
\end{equation}
(extending $f$ by zero outside of $[N]$) it is a simple matter to conclude that
$$ \| \Delta_h f \|_{U^s[N]} \gg 1$$
for \emph{many} $h \in [-N,N]$, by which we mean for all $h$ in a subset $H \subseteq [-N,N]$ with $|H| \gg N$.  Applying the hypothesis $\GI(s-1)$, we conclude that for many $h \in [-N,N]$, there exists a nilsequence $\chi_h$ of step $s-1$ which correlates with $\Delta_h f$, that is to say
\begin{equation}\label{enhf}
 |\E_{n \in [N]} \Delta_h f(n) \overline{\chi_h(n)}| \gg 1.
\end{equation}

Our goal is to show that $f$ correlates with an $s$-step nilsequence $\theta$. Heuristically, then, we expect the $(s-1)$-step nilsequences $\chi_h$ to behave like a derivative $\Delta_h \theta$ of such a nilsequence.  Suppose that we are in a situation where the $\chi_h$ do indeed ``behave like'' $\Delta_h \theta$ in an ostensibly rather weak way, namely
\begin{equation}\label{chai}
 \chi_h = \Delta_h \theta \cdot \psi_h
\end{equation}
where the $\psi_h$ are ``lower-order'' $(s-2)$-step nilsequences. Then we can rewrite \eqref{enhf} as
$$
 |\E_{n \in [N]} \Delta_h(f\overline{\theta})(n) \overline{\psi_h(n)}| \gg 1.
$$
Using the converse to $\GI(s-2)$ (see e.g. \cite[Appendix G]{u4-inverse}), we conclude that
$$
 \|\Delta_h(f\overline{\theta})\|_{U^{s-1}[N]} \gg 1
$$
for many $h \in [-N,N]$.  Using \eqref{fasn} (with $s-1$ in place of $s$), we conclude that
$$
 \| f\overline{\theta} \|_{U^{s}[N]} \gg 1.
$$
By a further appeal to the inductive hypothesis $\GI(s-1)$, we have
$$ |\E_{n \in [N]} f(n) \overline{\theta(n)} \overline{\psi(n)}| \gg 1$$
for some $(s-1)$-step nilsequence $\psi$.  Since $\theta \psi$ is an $s$-step nilsequence, we obtain the claim.

We may thus formulate our ``cohomological'' task more precisely: we must show that $h \mapsto \chi_h$ is a ``coboundary'' in the sense that \eqref{chai} holds for many $h$, and some $s$-step nilsequence $\theta$ and $(s-1)$-step nilsequences $\psi_h$.  

\begin{cheat} Actually, this is an oversimplification in a number of minor ways.  For instance, it is convenient to allow the two factors of $\theta$ that appear in $\Delta_h \theta(n) = \theta(n+h) \overline{\theta(n)}$\eqref{chai} to be distinct. In other words, we have a representation \begin{equation}\label{chia-weak}
 \chi_h = \theta(n+h) \overline{\theta'(n)} \psi_h(n)
\end{equation}
for some nilsequences $\theta,\theta'$ of degree $s$. The above arguments can be adapted to this case by using the \emph{Cauchy-Schwarz-Gowers inequality} (see \cite{gowers-longaps}) to decouple $\theta$ and $\theta'$.  Secondly, for technical reasons having to do with a topological obstruction that we will discuss in the next section, the nilsequences here will be vector- rather than scalar-valued. Finally, in the actual proof, one needs to modify $\chi_h$ at various stages of the argument to a slightly different nilsequence $\chi'_h$ which still correlates with $\Delta_h f$, and so \eqref{chai} would apply to the nilsequences $\chi'_h$ rather than $\chi_h$.

To keep the exposition simple (at the expense of strict accuracy), we will ignore these details and pretend that our goal is to establish a representation of the form \eqref{chai}.
\end{cheat}


\section{Nilcharacters}
\label{sec3}
Our arguments are geared towards the case $s \geq 3$, but let us temporarily consider the $s=2$ case as motivation.  In that case, the $\chi_h$ are $1$-step nilsequences.  It is not difficult to see that such sequences take the form $\chi_h(n) = F( \xi_h n )$ where $F: (\R/\Z)^d \to \C$ is a Lipschitz function on a torus of bounded dimension $d=O(1)$, and $\xi_h \in \R^d$ is a vector-valued frequency.  These sequences were obtained from the hypothesis $\GI(1)$, which asserts that functions of large $U^2$-norm correlate with a $1$-step nilsequence.

The space of $1$-step nilsequences is in some sense ``generated'' by a special type of $1$-step nilsequence, namely the \emph{Fourier characters} $n \mapsto e( \xi n )$ where $\xi \in \R$ is some \emph{frequency}.  Indeed, from Fourier analysis or the Stone-Weierstrass theorem it is easy to see that every $1$-step nilsequence can be approximated uniformly to arbitrary accuracy by a bounded linear combination of Fourier characters.  In particular, $\GI(1)$ implies that functions of large $U^2$-norm correlate with a Fourier character.

Fourier characters have several additional pleasant properties inside the space of $1$-step nilsequences.  For instance, we have the following facts.
\begin{enumerate}
\item They always have magnitude $1$, and can therefore be inverted by their conjugate: $e( \xi n ) \overline{e(\xi n)} = 1$.
\item They form an abelian group under multiplication.  \item They are translation-invariant modulo lower order terms: for any $h$, $e(\xi(n+h))$ and $e(\xi n)$ differ only by a constant depending on $h$ (i.e. a $0$-step nilsequence).  
\item The mean $\E_{n \in [N]} e(\xi n)$ of a Fourier character is negligible unless the frequency $\xi$ is extremely small (more precisely, if $\xi = O(1/N)$), in which case the character $e(\xi n)$ is ``essentially constant'' (and thus essentially a $0$-step nilsequence).
\end{enumerate}

For the more general argument, as in many other places \cite{green-tao-nilratner,u4-inverse,host-kra}, it is convenient to define a notion of \emph{nilcharacter} in such a way that analogues of the above four properties are still satisfied.  

For $s \geq 2$, an $s$-step nilmanifold $G/\Gamma$ is usually not a torus; however, it is always a torus \emph{bundle} over an $(s-1)$-step nilmanifold $G/G_{s} \Gamma$ with structure group equal to the torus $G_{s}/\Gamma_{s}$, where $G = G_0 = G_1 \supseteq G_2 \supseteq \ldots \supseteq G_{s} \geq \{\id\}$ is the lower central series of the $s$-step nilpotent group $G$, and $\Gamma_i := \Gamma \cap G_i$.  An $s$-step \emph{nilcharacter} is then an $s$-step nilsequence $n \mapsto F( g^n x )$, where $g \in G$, $x \in G/\Gamma$, and $F$ is a Lipschitz function with $|F|=1$ pointwise and obeying the \emph{vertical frequency} condition
\begin{equation}\label{fgx}
F( g_{s} x ) = e( \xi(g_{s}) ) F(x)
\end{equation}
for all $x \in G/\Gamma$, $g_s \in G_s$, and for some continuous homomorphism $\xi: G_s \to \R/\Z$ that annihilates $\Gamma_s$. The homomorphism $\xi$ is referred to as the \emph{vertical frequency} of the nilcharacter.  

\begin{cheat}
For technical reasons it is convenient to generalise the concept of a nilcharacter by replacing the lower central series $G = G_0 = G_1 \supseteq G_2 \supseteq \ldots$ by a more general filtration $G = G_{(0)} \supseteq G_{(1)} \supseteq G_{(2)} \supseteq \ldots$ obeying the inclusion $[G_{(i)},G_{(j)}] \subseteq G_{(i+j)}$, and replacing the linear sequence $n \mapsto g^n x$ by a more general \emph{polynomial sequence} $n \mapsto g(n)$ adapted to this filtration. This generalisation is needed in order to obtain a clean quantitative equidistribution theory for nilsequences and nilcharacters, as explained in some detail in \cite{green-tao-nilratner}.  We will however gloss over the distinction between linear sequences on nilmanifolds and polynomial sequences on filtered nilmanifolds here.

\end{cheat}

A basic example of an $s$-step nilcharacter is a polynomial phase $n \mapsto e(P(n))$, where $P: \Z \to \R/\Z$ is a polynomial of degree at most $s$.  An important family of \emph{near}-examples of nilcharacters come from the more general class of \emph{bracket polynomial phases}, of which the bracket quadratic phase $n \mapsto e( \alpha n \lfloor \beta n \rfloor )$ for some $\alpha,\beta \in \R$ (with $\lfloor \cdot \rfloor$ being the greatest integer function) is a simple model example.  This sequence can \emph{almost} be expressed as a $2$-step nilcharacter on the Heisenberg nilmanifold, which is often presented using $3 \times 3$ matrices (see e.g. \cite{green-tao-u3inverse,u4-inverse}). Here we present the same construction slightly more abstractly, since this will be helpful later. 

Consider, then,  the free $2$-step nilpotent Lie group $G$ generated by elements $e_1, e_2$ such that all commutators of order $3$ or higher, such as $[e_1, [e_1, e_2]]$, are trivial. Here, as is fairly standard in group theory, we write $[x,y] = x^{-1} y^{-1} x y$. A typical element of $G$ has the form $(t_1,t_2,t_{12}) := e_1^{t_1} e_2^{t_2} [e_1, e_2]^{t_{12}}$, $t_1, t_2, t_{12} \in \R$, and multiplication in these coordinates is given by
\[ (t_1, t_2, t_{12}) \ast (t'_1, t'_2, t'_{12}) = (t_1 + t'_1, t_2 + t'_2, t_{12} + t'_{12} + t'_1 t_2).\]
In particular we may identify the discrete subgroup $\Gamma$ consisting of those elements with integer coordinates. Then $G/\Gamma$ is a nilmanifold and a given point with coordinates $(t_1, t_2, t_{12})$ is equivalent under the right action of $\Gamma$ to the point
\[ (\{t_1\}, \{t_2\}, \{t_{12} - \lfloor t_2\rfloor t_1\}).\] This identifies those points  of $G$ with coordinates satisfying $0 \leq t_1, t_2 , t_{12} \leq 1$ as a fundamental domain for the right action of $\Gamma$ on $G$.

One can easily calculate, for specific $g , x\in G$, coordinates for $g^n x$ in the fundamental domain for $G/\Gamma$. In so doing one already sees objects such as $\alpha n \lfloor \beta n \rfloor$ making an appearance. These calculations are even easier if, instead, we look at $g(n)\Gamma$ with $g(n) := e_1^{\alpha n} e_2^{\beta n}$, this being a example of a polynomial sequence on the Heisenberg group $G$ (adapted to the lower central series filtration on $G$). We have
\[ g(n)\Gamma = (\{\alpha n\}, \{\beta n\}, \{ - \lfloor \beta n \rfloor \alpha n\})\Gamma.\] In particular we see that 
\[ F(g(n)\Gamma) = e(\alpha n \lfloor \beta n \rfloor),\]
 where $F : G/\Gamma \rightarrow \C$ is the function defined by 
 \[ F((x,y,z)\Gamma) := e(-z)\] when $0 \leq  x, y, z < 1$.

Why, then, is this a \emph{near} example of a nilsequence and not an actual example? The answer lies in the function $F$, which is unfortunately discontinuous at the edges of the fundamental domain.  This is inevitable due to the twisted nature of the torus bundle that forms the Heisenberg nilmanifold.  However, if one allows nilsequences to be vector-valued instead of scalar-valued, one can avoid this topological obstruction.  For instance, if $1 = \eta_1(x)^2 + \eta_2(x)^2$ is a partition of unity on $\R/\Z$ with $\eta_1, \eta_2$ supported in $[0.1,0.9]$ and $[-0.4,0.4]$ (say) respectively, then the vector-valued sequence 
\begin{equation}\label{vector}
n \mapsto ( e(\alpha n \lfloor \beta n \rfloor) \eta_1(\beta n \hbox{ mod } 1), e(\alpha n \lfloor \beta n - \frac{1}{2} \rfloor) \eta_2(\beta n \hbox{ mod } 1)  )
\end{equation}
will be a nilsequence (taking values in the unit sphere $S^3$ of $\C^2$) associated to the Heisenberg nilmanifold; the piecewise discontinuities of the greatest integer part function have been avoided by use of the cutoffs $\eta_1, \eta_2$, making the relevant function $F$ genuinely Lipschitz and not merely piecewise Lipschitz.

\begin{cheat}  As one can see, the vector-valued nilcharacters such as \eqref{vector} are more complicated than their scalar almost-nilcharacter counterparts such as $e( \alpha n \lfloor \beta n \rfloor )$.  To avoid some distracting notational complications, we will cheat by pretending that sequences such as $e(\alpha n \lfloor \beta n \rfloor )$ are genuine nilcharacters.  With this cheat, we can pretend that all nilsequences involved are scalar-valued rather than vector-valued, and we can use bracket polynomial phases as motivating examples of nilsequences.  For instance, with this cheat, $e( \alpha n^2 )$ and $e( \alpha n \lfloor \beta n \rfloor )$ are $2$-step nilcharacters,
$$e( \alpha n^3 ), e( \alpha n \lfloor \beta n^2 \rfloor ), e( \alpha n^2 \lfloor \beta n \rfloor ), e( \alpha n \lfloor \lfloor \beta n \rfloor \gamma n \rfloor ), e( \alpha n \lfloor \beta n \rfloor \lfloor \gamma n \rfloor )$$
are $3$-step nilcharacters (for $\alpha,\beta,\gamma \in \R$), and so forth.  Indeed, there is a sense in which bracket polynomial phases are essentially the \emph{only} examples of nilcharacters; see \cite{leibman} for further discussion (and \cite{green-tao-u3inverse} for a discussion of the $2$-step case).
\end{cheat}

Nilcharacters enjoy analogues of the four useful properties mentioned earlier:
\begin{enumerate}
\item They have magnitude $1$ (and are thus essentially inverted by their complex conjugations, if this statement is interpreted suitably in the vector-valued case).
\item They (essentially) form an abelian group under multiplication (again using a suitable interpretation of this statement in the vector-valued case, using tensor products).
\item They are essentially translation-invariant modulo lower step errors, much as a polynomial $P(n)$ of degree $s$ is translation-invariant modulo degree $(s-1)$ errors.  In particular, the derivative $\Delta_h \theta$ of an $s$-step nilcharacter is an $(s-1)$-step nilsequence.  
\item The mean $\E_{n \in [N]} \chi(n)$ of a nilcharacter is negligible unless $\chi$ can be represented as an $(s-1)$-step nilsequence.  This property is a consequence of the quantitative equidistribution theory of nilsequences \cite{green-tao-nilratner}.
\end{enumerate}

By using Fourier analysis or the Stone-Weierstrass theorem much as in the $1$-step case, one can show that any $(s-1)$-step nilsequence can be approximated uniformly to arbitrary accuracy by a bounded linear combination of $(s-1)$-step nilcharacters.  Because of this, we can assume without loss of generality that the $\chi_h$ in \eqref{enhf} are $(s-1)$-step nilcharacters, rather than merely $(s-1)$-step nilsequences. That is, we assume henceforth that
\begin{equation}\label{enhf-char}
|\E_{n \in [N]} \Delta_h f(n) \overline{\chi_h(n)}| \gg 1
\end{equation}
for many $h \in [-N,N]$, where the $\chi_h$ are $(s-1)$-step nilcharacters.\vspace{8pt}

\emph{Remarks.} The space of $(s-1)$-step nilcharacters, modulo $(s-2)$-step errors, is denoted $\Symb_{s-1}(\Z)$ in \cite{gtz-inverse}; thus for instance $\Symb_1(\Z) = \hat \Z \equiv \R/\Z$ is the Pontryagin dual of $\Z$, and the abelian group $\Symb_{s-1}(\Z)$ for higher $s$ can be viewed as a higher order generalisation of the Pontryagin dual. As hinted in the above discussion, there is a close relationship between nilcharacters and bracket polynomial phases; some aspects of this relationship are explored in \cite{leibman}. These two types of object can be viewed as two different perspectives on the same concept, with the nilcharacter perspective being superior for understanding equidistribution, and the bracket polynomial perspective being superior for direct (albeit messy) algebraic manipulation.  (The equidistribution of bracket polynomials is studied directly in \cite{ha2}, but it seems cleaner to study equidistribution via nilcharacters instead.) Furthermore, bracket polynomials are a useful source of examples for building intuition.  

In an early version of the full paper \cite{gtz-inverse}, the theory of both nilcharacters and bracket polynomials, together with the connections between them, was extensively developed.  Unfortunately this led to a significant increase in the length of the paper.  The current version of the paper discards the theory of bracket polynomials, and works purely through the formalism of nilcharacters. This has shortened and simplified the paper considerably, albeit at the cost of making some of the algebraic manipulations more abstract. In this announcement, we will rely on bracket polynomial examples for motivation. However, we will indicate at various junctures how various concepts concerning bracket polynomials may be translated into the nilcharacter framework.

\section{Approximate linearity}\label{approx-lin-section}

We return to the problem of establishing a representation of $\chi_h$ that is roughly of the form \eqref{chai}, that is to say
\begin{equation}\label{chai-repeat} \chi_h = \Delta_h \theta \cdot \psi_h \end{equation} for some $s$-step nilsequence $\theta$ and some $(s-2)$-step ``errors'' $\psi_h$. As a consequence of the discussion in the preceding section, we may assume that each $\chi_h$ is a nilcharacter.  

Suppose for the moment that $\chi_h(n)$ was in fact exactly equal to $\Delta_h \theta(n)$ for some $s$-step nilcharacter $\theta$ for all $n, h \in \Z$. Then $\chi_h$ would necessarily obey the \emph{cocycle equation}
\begin{equation}\label{cocycle}
 \chi_{h+k}(n) = \chi_h(n) \chi_k(n+h)
\end{equation}
for all $n,h,k \in \Z$.  

In the converse direction, the cocycle equation \eqref{cocycle} is a sufficient condition to have a representation of the form $\chi_h = \Delta_h \theta$ for \emph{some} function $\theta: \Z \to S^1$ (not necessarily a nilcharacter). Indeed, one can simply set $\theta(n) := \chi_n(0)$, since \eqref{cocycle} then gives
$$ \theta(n+h) = \theta(n) \chi_h(n)$$
for all $n, h$.  To put it another way, when one works in the category of all unit magnitude sequences, rather than the category of nilcharacters, the first cohomology group $H^1(\Z,S^1)$ of the integers is trivial.  

These observations then suggest a strategy for obtaining the desired representation \eqref{chai-repeat} for the $(s-1)$-step nilcharacters $\chi_h$.  One would first show that the nilcharacters $\chi_h$ obey some property resembling the cocycle equation \eqref{cocycle}; then, one would use that cocycle equation, together with the triviality of some sort of ``first cohomology group'' of the integers, to ``integrate'' the cocycle and obtain \eqref{chai-repeat}.

We begin with the first stage. The cocycle property \eqref{cocycle} was, of course deduced from the assumption that $\chi_h = \Delta_h \theta$ exactly. We, however, are operating under the much weaker assumption that $\chi_h$ merely \emph{correlates} with $\Delta_h \theta$, for many $h$, up to lower order terms. To handle this we use an application of the Cauchy-Schwarz inequality due to Gowers \cite{gowers-4aps}. The conclusion of this is as follows.

\begin{lemma}[Approximate cocycle equation]\label{gow}  Suppose that $f: [N] \to \D$ is a function, and that for all $h$ in a dense subset $H \subseteq [-N,N]$ the derivative $\Delta_h f$ correlates with $\chi_h$ for some function $\chi_h: \Z \to \D$.  Then for $\gg N^3$ additive quadruples $h_1,h_2,h_3,h_4 \in H$ \textup{(}that is, quadruples with $h_1 + h_2 = h_3 + h_4$\textup{)} one has
\begin{equation}\label{xanch}
 |\E_{n \in [N]}  \chi_{h_1}(n) \chi_{h_2}(n+h_1-h_4) \overline{\chi_{h_3}(n) \chi_{h_4}(n+h_1-h_4)}| \gg 1.
 \end{equation}
\end{lemma}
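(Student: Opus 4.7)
The plan is to adapt the standard two-step Cauchy--Schwarz argument of Gowers \cite{gowers-4aps}: combine the correlation hypotheses over $h \in H$ into a single expectation, then eliminate both copies of $f$ via two successive applications of Cauchy--Schwarz, leaving a bound on an average that involves only the $\chi_h$'s over additive quadruples.

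To begin, for each $h \in H$ I choose a unit phase $c_h \in \C$ so that $c_h \E_n \Delta_h f(n) \overline{\chi_h(n)} = |\E_n \Delta_h f(n)\overline{\chi_h(n)}|$; averaging the correlation hypothesis over $h \in [-N,N]$ and using $|H| \gg N$ gives
\[
 \E_{h \in [-N,N]} \E_{n \in [N]} 1_H(h)\, c_h\, f(n+h) \overline{f(n)}\,\overline{\chi_h(n)} \gg 1.
\]
Substituting $m := n + h$ and setting $F(m,n) := 1_H(m-n)\, c_{m-n}\, \overline{\chi_{m-n}(n)}$ turns this into $\E_{m,n} f(m)\overline{f(n)} F(m,n) \gg 1$. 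I then apply Cauchy--Schwarz in $m$ and, after expansion, again in the pair $(n_1, n_2)$, using $|f|\le 1$ to bound the $f$-factors, obtaining
\[
 \E_{m_1, m_2, n_1, n_2} F(m_1, n_1) \overline{F(m_1, n_2)}\,\overline{F(m_2, n_1)} F(m_2, n_2) \gg 1.
\]

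Setting $h_{ij} := m_i - n_j$, the relation $h_{11} + h_{22} = h_{12} + h_{21}$ is built in. Unpacking $F$ and reparametrising by $(n_1,\, a := n_2 - n_1,\, h_{11},\, h_{21})$, the $n_1$-independent phases $c_{h_{ij}}$ can be pulled outside the $n_1$-average and absorbed by the triangle inequality; a pigeonhole argument then produces $\gg N^3$ additive quadruples in $H^4$ for which
\[
 \big|\E_{n \in [N]} \overline{\chi_{h_{11}}(n)} \chi_{h_{12}}(n+a) \chi_{h_{21}}(n) \overline{\chi_{h_{22}}(n+a)}\big| \gg 1,
\]
with $a = h_{11} - h_{12}$. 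Relabeling $h_1 := h_{11}$, $h_2 := h_{22}$, $h_3 := h_{21}$, $h_4 := h_{12}$ gives $h_1 + h_2 = h_3 + h_4$ and $h_1 - h_4 = a$, and the displayed expression is precisely the complex conjugate of the target in \eqref{xanch}.

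The real obstacle is just the bookkeeping in the last step: one must check that the pairing of arguments $n_1, n_2$ emerging from the Cauchy--Schwarz expansion, together with the resulting conjugation pattern on the $\chi$'s, lines up, after some relabeling of the additive quadruple, with the shifts $n, n+h_1-h_4$ and conjugates in \eqref{xanch}. The Cauchy--Schwarz applications themselves are completely standard, and the unit-modulus phases $c_h$ are absorbed without difficulty.
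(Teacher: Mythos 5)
Your proposal is correct and is essentially the same argument as the paper's: absorb unimodular phases into $\chi_h$ to make the correlations positive, average over $h$, substitute $m=n+h$ to get a bilinear form in $f(m)\overline{f(n)}$ times a kernel depending on $m-n$ and $n$, apply Cauchy--Schwarz twice to eliminate $f$, and relabel the four shifts as an additive quadruple. Your introduction of the explicit bilinear kernel $F(m,n)$ and the indicator $1_H$ is a minor notational variant, and the conjugation/relabeling bookkeeping you flag as the "real obstacle" works out exactly as you predict (your expression is the conjugate of \eqref{xanch}), matching the paper's final substitution $m-n=h_1$, $m-n'=h_4$, $m'-n=h_3$, $m'-n'=h_2$.
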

\begin{proof} We may clearly replace $\chi_h(n)$ by $e(\theta_h) \chi_h(n)$, for any phases $\theta_h \in \R$. Choose the $\theta_h$ in such a way that, once this replacement is made, $\E_n \Delta_h f(n) \chi_h(n)$ is real and positive. Taking expected values over $h$ and making the substitution $m := n + h$ gives 
\[ \E_{m,n} f(m) \overline{f(n)} \chi_{m - n}(n) \gg 1.\]
Now apply the Cauchy-Schwarz inequality in the variables $m,n$ in turn to eliminate the bounded quantities $f(m)$ and $f(n)$, obtaining
\[ \E_{m, m', n,n'} \chi_{m-n}(n) \overline{\chi_{m' - n}(n)\chi_{m - n'}(n') }\chi_{m' - n'}(n') \gg 1.\]
This is equivalent to the stated result, as one may see upon substituting $m - n = h_1$, $m - n' = h_4$, $m' - n = h_3$ and $m' - n' = h_2$.\end{proof}

\emph{Remarks.} To relate the above lemma to the preceding discussion of cocycle equations, suppose that \eqref{cocycle} is always satisfied. Then one may easily prove that 
\begin{equation}\label{exact-gowers-cocycle} \chi_{h_1}(n) \chi_{h_2}(n+h_1-h_4) \overline{\chi_{h_3}(n) \chi_{h_4}(n+h_1-h_4)} = 1\end{equation} identically whenever $h_1+h_2=h_3+h_4$, a statement which obviously bears comparison to \eqref{xanch}.  Indeed, from \eqref{cocycle} one has
\[ \chi_{h_1}(n) = \chi_{h_3}(n) \chi_{h_1-h_3}(n+h_3)\] and  \[ \chi_{h_4}(n+h_1-h_4) = \chi_{h_2}(n+h_1-h_4) \chi_{h_4-h_2}(n+h_1+h_2-h_4),\]
while from the additive quadruple property one has \[ \chi_{h_4-h_2}(n+h_1+h_2-h_4) = \chi_{h_1-h_3}(n+h_3).\]
Putting these together confirms \eqref{exact-gowers-cocycle}. It is perhaps interesting to note that little has been lost in passing from \eqref{cocycle} to \eqref{exact-gowers-cocycle} (and so we may be confident that little has been lost in asserting Lemma \ref{gow}). Indeed, if \eqref{exact-gowers-cocycle} holds then applying it with $(h_1,h_2,h_3,h_4)=(h+k,0,h,k)$ gives
$$ \chi_{h+k}(n) \chi_0(n+h) = \chi_h(n) \chi_k(n+h).$$
This is almost \eqref{cocycle}.  Setting $\theta(n) := \chi_n(0)$ and $\theta'(n) := \chi_n(0) \overline{\chi_0(n)}$ then gives
\begin{equation}\label{chacha}
 \chi_h(n) = \theta(n+h) \overline{\theta'(n)},
\end{equation}
which is a variant of \eqref{chia-weak}.  Conversely, it is easy to verify that any $\chi_h$ of the form \eqref{chacha} (with $\theta, \theta'$ having magnitude $1$) obeys \eqref{exact-gowers-cocycle}.  This helps explain why our arguments will end up concluding \eqref{chia-weak} rather than \eqref{chai-repeat}.

From the properties of nilcharacters mentioned in the previous section, an immediate corollary of Lemma \ref{gow} is the following.

\begin{corollary}[Top order approximate linearity]\label{approxa}  Let $f: [N] \to \D$ be a function, and suppose that for all $h$ in a dense subset $H \subseteq [-N,N]$ the derivative $\Delta_h f$ correlates with an $(s-1)$-step nilcharacter $\chi_h$.  Then for many additive quadruples $h_1,h_2,h_3,h_4 \in H$ the $(s-1)$-step nilcharacter $\chi_{h_1} \chi_{h_2} \overline{\chi_{h_3}} \overline{\chi_{h_4}}$ is an $(s-2)$-step nilsequence.
\end{corollary}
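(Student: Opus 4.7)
The plan is to apply Lemma \ref{gow} and then use the four structural properties of nilcharacters listed in Section \ref{sec3} to convert the approximate cocycle equation into the claim.

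First, I apply Lemma \ref{gow} to obtain that for $\gg N^3$ additive quadruples $(h_1,h_2,h_3,h_4) \in H^4$,
\[ \bigl|\E_{n \in [N]} \chi_{h_1}(n)\, \chi_{h_2}(n+h_1-h_4)\, \overline{\chi_{h_3}(n)\, \chi_{h_4}(n+h_1-h_4)}\bigr| \gg 1. \]
The displayed expression is \emph{not} quite the product $\chi_{h_1}\chi_{h_2}\overline{\chi_{h_3}\chi_{h_4}}$ evaluated at $n$: two of the four factors are shifted by $h_1-h_4$. The first step is to undo this shift using property (3) of nilcharacters: for any fixed $k$, the derivative $\Delta_k \chi_{h_i}$ is an $(s-2)$-step nilsequence, so
\[ \chi_{h_i}(n+h_1-h_4) = \chi_{h_i}(n)\, \psi_{h_i,h_1-h_4}(n) \]
for some $(s-2)$-step nilsequence $\psi_{h_i,h_1-h_4}$ (with bounded complexity depending only on $h_i$ and $h_1-h_4$ through the nilmanifold data, not through the actual values of these integers). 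Substituting this in for $i=2$ and $i=4$, we obtain
\[ \bigl|\E_{n \in [N]} \Theta_{\mathbf h}(n)\, \overline{\eta_{\mathbf h}(n)}\bigr| \gg 1, \]
where $\mathbf h = (h_1,h_2,h_3,h_4)$, $\Theta_{\mathbf h} := \chi_{h_1}\chi_{h_2}\overline{\chi_{h_3}\chi_{h_4}}$, and $\eta_{\mathbf h}$ is an $(s-2)$-step nilsequence of bounded complexity.

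Next, by property (2), $\Theta_{\mathbf h}$ is itself an $(s-1)$-step nilcharacter (the tensor product/multiplication of $(s-1)$-step nilcharacters). By Stone--Weierstrass, the $(s-2)$-step nilsequence $\eta_{\mathbf h}$ can be approximated uniformly by a bounded linear combination of $(s-2)$-step nilcharacters, so by pigeonhole there exists an $(s-2)$-step nilcharacter $\tilde\eta_{\mathbf h}$ with
\[ \bigl|\E_{n \in [N]} \Theta_{\mathbf h}(n)\, \overline{\tilde\eta_{\mathbf h}(n)}\bigr| \gg 1. \]
Since $\tilde\eta_{\mathbf h}$ is a fortiori an $(s-1)$-step nilcharacter, the product $\Theta_{\mathbf h}\,\overline{\tilde\eta_{\mathbf h}}$ is an $(s-1)$-step nilcharacter whose mean on $[N]$ is $\gg 1$. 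Property (4) — the quantitative equidistribution theorem of \cite{green-tao-nilratner} — then forces this nilcharacter to in fact be an $(s-2)$-step nilsequence. Consequently $\Theta_{\mathbf h} = (\Theta_{\mathbf h}\overline{\tilde\eta_{\mathbf h}}) \cdot \tilde\eta_{\mathbf h}$ is a product of two $(s-2)$-step nilsequences, hence itself an $(s-2)$-step nilsequence, as required.

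The main technical obstacle is the translation step: one must show that the error terms $\psi_{h_i,h_1-h_4}$ produced by shifting a nilcharacter by $h_1-h_4$ are $(s-2)$-step nilsequences with \emph{uniformly bounded} complexity (independent of the quadruple) and not just nilsequences of possibly growing complexity. This is where the filtered/polynomial-sequence formalism glossed over in the second cheat of Section \ref{sec3} plays a role; the $(s-1)$-step/$(s-2)$-step split for $\Delta_k\chi_h$ must be made quantitative in the parameters, using the Leibman-type Taylor expansion of the polynomial sequence $g(n)$ along the filtration. Once this uniform control is in place, the remaining deductions are direct applications of the listed properties (2)--(4) and a single Stone--Weierstrass approximation.
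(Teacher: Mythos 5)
Your proof is correct and is essentially the intended argument: the paper dismisses this as "an immediate corollary of Lemma \ref{gow}" together with the four listed properties of nilcharacters, and you have filled in precisely those steps. Your chain is the right one: (a) invoke Lemma \ref{gow}; (b) use property (3) (translation invariance modulo lower-step errors) to remove the shifts by $h_1-h_4$ on the $\chi_{h_2}$ and $\chi_{h_4}$ factors, absorbing them into an $(s-2)$-step nilsequence $\eta_{\mathbf h}$; (c) use property (2) to observe that $\Theta_{\mathbf h}:=\chi_{h_1}\chi_{h_2}\overline{\chi_{h_3}\chi_{h_4}}$ is an $(s-1)$-step nilcharacter; (d) approximate $\eta_{\mathbf h}$ uniformly by a bounded linear combination of $(s-2)$-step nilcharacters (Stone--Weierstrass) and pigeonhole to a single $\tilde\eta_{\mathbf h}$; (e) apply property (4) to the $(s-1)$-step nilcharacter $\Theta_{\mathbf h}\overline{\tilde\eta_{\mathbf h}}$, whose mean is $\gg 1$, to conclude it is representable as an $(s-2)$-step nilsequence, hence so is $\Theta_{\mathbf h}=(\Theta_{\mathbf h}\overline{\tilde\eta_{\mathbf h}})\cdot\tilde\eta_{\mathbf h}$. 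The caveat you raise at the end --- that the shift-error nilsequences $\Delta_{h_1-h_4}\chi_{h_i}$ must have complexity bounded uniformly in the quadruple, not merely step bounded by $s-2$ --- is indeed the genuine technical point being suppressed in this announcement; in \cite{gtz-inverse} it is handled automatically by the nonstandard formalism (a single nilmanifold of bounded complexity carrying all the $\chi_h$), and concretely it follows because $\Delta_k\chi(n)=F(g^{n}g^{k}x)\overline{F(g^{n}x)}$ lives on the fixed product nilmanifold $(G/\Gamma)^2$ with a Lipschitz constant independent of $k$.

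Two small points worth being aware of, though neither affects the correctness at the announcement's level of rigour. First, in step (e) you need $\Theta_{\mathbf h}\overline{\tilde\eta_{\mathbf h}}$ to genuinely be an $(s-1)$-step nilcharacter; this works because $\tilde\eta_{\mathbf h}$ is trivial at level $s-1$, so the vertical frequency of the product is that of $\Theta_{\mathbf h}$ alone. Second, property (4) gives "representable as an $(s-2)$-step nilsequence" in a slightly loose sense (compare the "essentially constant" qualifier in the $s=1$ discussion); the corollary's statement is phrased at this same level of informality, so there is no mismatch, but in the full paper this is made precise.
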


This corollary asserts that the map $h \mapsto \chi_h$ is in some sense approximately (affine-)linear to top order.  Because it only controls the top order behaviour of $\chi_h$, this corollary is strictly weaker than Lemma \ref{gow}, and will turn out to be insufficient by itself for the purposes of integrating $\chi_h$ in the sense of \eqref{chai-repeat} (or \eqref{chia-weak}). Eventually we need to return to Lemma \ref{gow} and study the \emph{lower-order} (and more specifically, the $(s-2)$-step) terms in more detail.  Nevertheless, Corollary \ref{approxa} is an important partial result and it yields a crucial \emph{linearisation} of the family of nilcharacters $\chi_h$.
We turn to the details of this now.

\section{Linearisation}\label{approx-lin-sec}

We now take the approximate linearity relationship in Corollary \ref{approxa} and see what this implies about the family of nilcharacters $\chi_h$.  As motivation, we begin by discussing the $s=2$ case, which was treated in \cite{gowers-4aps} and developed further in \cite{green-tao-u3inverse}. Here, the one-step nilcharacters $\chi_h$ take the form $\chi_h(n) = e(\xi_h n)$ for some frequency $\xi_h \in \R/\Z$. Corollary \ref{approxa} asserts in this case that the map $h \mapsto \xi_h$ is approximately linear
in the sense that
\begin{equation}\label{xha}
 \xi_{h_1} + \xi_{h_2} - \xi_{h_3} - \xi_{h_4} = O(\frac{1}{N}) \hbox{ mod } 1
\end{equation}
for many additive quadruples $h_1+h_2=h_3+h_4$.

This type of constraint was analysed in \cite{gowers-4aps}, using what is now called the \emph{Balog-Szemer\'edi-Gowers lemma} \cite{balog,gowers-4aps}, together with a version of Freiman's inverse sumset theorem \cite{freiman} due to Ruzsa \cite{ruzsa-freiman}.  As a consequence of these tools from additive combinatorics and a little extra \emph{geometry of numbers}, one can deduce from \eqref{xha} that the map $h \mapsto \xi_h$ is somewhat \emph{bracket-linear}, in the sense that there exist real numbers $\alpha_1,\ldots,\alpha_m,\beta_1,\ldots,\beta_m,\gamma$ for some $m=O(1)$ such that one has the relation
\begin{equation}\label{xih-linear}
 \xi_h = \sum_{j=1}^m \alpha_j \lfloor \beta_j h \rfloor + \gamma + O(\frac{1}{N}) \hbox{ mod } 1
\end{equation}
for many values of $h$. See \cite{green-tao-u3inverse} for further discussion and \cite[Appendix C]{u4-inverse} for a guide to how to use the arguments of \cite{green-tao-u3inverse} to supply a proof of this exact claim, which was not required there. In particular, we can approximate $\chi_h(n)$ (modulo ``lower order terms'') by the expression
\begin{equation}\label{chan}
 \chi(h,n) := e(\gamma n) \prod_{j=1}^m e( \alpha_j n \lfloor \beta_j h \rfloor ).
\end{equation}
A new innovation in our longer paper to come is to view \eqref{chan} as a (piecewise) ``bi-nilcharacter'' of two variables $h, n$, which is of ``bi-degree'' $(1,1)$
in $h,n$.  Informally, this means that each bracket monomial that comprises the phase of $\chi(h,n)$ is of degree at most $1$ in $h$ and of degree at most $1$ in $n$.  Properly formalising this notion of bi-degree involves setting up the notion of a polynomial sequence in quite general filtered nilmanifolds; this will be done in the full paper \cite{gtz-inverse} and we shall say little more about it here. Rather, we shall limit ourselves to an illustrative example, namely that of describing the sequence $(h,n) \mapsto e( \alpha n \lfloor \beta h \rfloor )$ as a (piecewise) bi-nilcharacter of bi-degree $(1,1)$.  

By almost exactly the same computation as in \S \ref{sec3} we see that 
\[ e(\alpha n \lfloor \beta h \rfloor) = F(g(h,n)\Gamma),\] where here we are working on the Heisenberg nilmanifold $G/\Gamma$, the function $F$ is given by $F(x,y,z) = e(-z)$ as before, and now
\[ g(h,n) := e_1^{\alpha n} e_2^{\beta h}.\]

Once again we must note that $F$ is not Lipschitz, but we shall imagine that it is for the purposes of this discussion. Given this, the key feature that qualifies $e(\alpha n \lfloor \beta h \rfloor)$ as a bi-nilcharacter is that the polynomial sequence $g(h,n)$ has bi-degree $(1,1)$ in the variables $h,n$.  What does this mean? If one introduces the partial derivative operators
$$ \partial_h^a g(h,n) := g(h+a,n) g(h,n)^{-1}$$
and
$$ \partial_k^b g(h,n) := g(h,n+b) g(h,n)^{-1},$$
then we can easily verify that $\partial_h^a \partial_h^b g$ and $\partial_k^a \partial_k^b g$ are trivial, that $\partial_h^a \partial_k^b g$ or $\partial_k^b \partial_h^a b$ takes values in $G_2 = [G,G]$, and that any triple derivative of $g$ is trivial.  It is this package of properties that we refer to as being of bi-degree $(1,1)$ in the $h,n$ variables.  More generally, to define a bi-nilsequence of bi-degree $(p,q)$, one needs to endow the nilpotent group $G$ with a two-parameter filtration $(G_{(i,j)})_{i,j \geq 0}$ obeying the inclusions $G_{(i,j)} \supseteq G_{(i',j')}$ when $i' \geq i, j' \geq j$ and $[G_{(i,j)},G_{(k,l)}] \subseteq G_{(i+k,j+l)}$ for $i,j,k,l \geq 0$, and ask that the sequence $g(h,n)$ be such that any mixed derivative involving $i$ differentiations in the $h$ variable and $j$ differentiations in the $n$ variable takes values in $G_{(i,j)}$.  See \cite{gtz-inverse} for details.

An example to keep in mind for a bi-nilcharacter of bi-degree $(p,q)$ is that of a polynomial phase
\begin{equation}\label{polyhn}
 (h,n) \mapsto e( \sum_{i=0}^p \sum_{j=0}^q \alpha_{i,j} h^i n^j ).
\end{equation}
As with our earlier discussion of 1-variable nilsequences this is not an especially representative example and one also needs to model ``bracket polynomial'' behaviour.  To give a more complicated example than the one just discussed arising from the Heisenberg nilmanifold, $$ e( \alpha n \lfloor \beta h \lfloor \gamma n \rfloor \rfloor )$$ is a (piecewise) bi-nilcharacter of bi-degree $(1,2)$ in $h,n$.

Now we turn to higher step analogues of the phenomena just discussed. 

\begin{theorem}[Linearisation]\label{linear}  Suppose that $f: [N] \to \D$ is a function such that for many $h$ in $[-N,N]$ the multiplicative derivative $\Delta_h f$ correlates with an $(s-1)$-step nilcharacter $\chi_h$.  Then there exists a bi-nilcharacter $\chi(h,n)$ of bi-degree $(1,s-1)$ in $h,n$ and $(s-2)$-step nilsequences $\psi_h$ such that $\Delta_h f$ correlates with $\chi(h,\cdot) \psi_h$ for many $h \in [-N,N]$.\end{theorem}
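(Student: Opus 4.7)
The plan is to take the approximate-additive-quadruple structure provided by Corollary \ref{approxa} and, via a Freiman-type argument in the abelian group $\Symb_{s-1}(\Z)$ of $(s-1)$-step nilcharacters modulo $(s-2)$-step errors, extract a bracket-linear-in-$h$ parameterisation of the family $(\chi_h)$. This follows the template of the $s=2$ case in \cite{gowers-4aps,green-tao-u3inverse}, where the bracket-linear form \eqref{xih-linear} was deduced from the approximate linearity \eqref{xha} by Balog--Szemer\'edi--Gowers plus Freiman--Ruzsa; the present theorem is the top-order analog at step $s-1$, with the role of $\R/\Z$ now played by $\Symb_{s-1}(\Z)$.

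Writing $\Symb_{s-1}(\Z)$ additively, Corollary \ref{approxa} asserts precisely that the map $h \mapsto [\chi_h]$ from $H \subseteq [-N,N]$ into $\Symb_{s-1}(\Z)$ satisfies
\begin{equation*}
  [\chi_{h_1}] + [\chi_{h_2}] = [\chi_{h_3}] + [\chi_{h_4}]
\end{equation*}
for $\gg N^3$ additive quadruples $h_1+h_2 = h_3+h_4$ in $H$. A standard application of the Balog--Szemer\'edi--Gowers lemma, followed by Pl\"unnecke-style sumset manipulations, then refines $H$ to a dense subset $H'$ on which this map becomes a genuine Freiman $2$-homomorphism; in particular the image $\{[\chi_h] : h \in H'\}$ has bounded doubling in $\Symb_{s-1}(\Z)$.

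The main obstacle is the structural step playing the role of Freiman--Ruzsa for the target group $\Symb_{s-1}(\Z)$. One must show that a small-doubling, approximately linearly indexed family of $(s-1)$-step nilcharacters can be written, modulo $(s-2)$-step errors, as $\chi_h(n) \equiv \chi(h,n)$ for a single bi-nilcharacter $\chi(h,n)$ of bi-degree $(1,s-1)$. Heuristically, the bracket-polynomial dictionary (cf.\ \cite{leibman,green-tao-u3inverse}) identifies $\Symb_{s-1}(\Z)$ with a group of degree-$(s-1)$ bracket polynomials modulo lower-degree ones; small-doubling subsets correspond to generalised arithmetic progressions of such polynomials, and a linear $h$-indexing of a bounded-complexity such progression is itself a bracket polynomial of degree at most $1$ in $h$ and $s-1$ in $n$, which is exactly the bi-degree $(1,s-1)$ condition. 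Making this rigorous requires the quantitative equidistribution theory of \cite{green-tao-nilratner}, both to certify when two nilcharacters agree modulo $(s-2)$-step errors (property (4) of Section \ref{sec3}) and to run the Freiman--Ruzsa-type argument inside the filtered-nilmanifold framework rather than a torus.

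Granted this representation, the conclusion is routine: the hypothesised correlation of $\Delta_h f$ with $\chi_h$ for $h \in H'$ passes to a correlation of $\Delta_h f$ with $\chi(h,\cdot)\,\psi_h$, where $\psi_h$ absorbs the $(s-2)$-step discrepancy between $\chi_h$ and $\chi(h,\cdot)$. Since $H'$ still has positive density in $[-N,N]$, this gives the theorem.
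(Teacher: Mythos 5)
Your starting point is correct: the hypothesis, via Corollary \ref{approxa}, does say that for many additive quadruples $h_1+h_2=h_3+h_4$ the product $\chi_{h_1}\chi_{h_2}\overline{\chi_{h_3}\chi_{h_4}}$ is an $(s-2)$-step nilsequence, i.e.\ the class map $h \mapsto [\chi_h] \in \Symb_{s-1}(\Z)$ is approximately additive. And your analogy with the $s=2$ case, where $\Symb_1(\Z)\cong\R/\Z$ and one amplifies an approximate linearity of the frequency $\xi_h$ into the bracket-linear form \eqref{xih-linear} by Balog--Szemer\'edi--Gowers plus Freiman--Ruzsa, is the right one to draw. But the step you label ``the main obstacle'' --- a structural theorem playing the role of Freiman--Ruzsa for $\Symb_{s-1}(\Z)$ --- is precisely where the whole difficulty lies, and your treatment of it is a heuristic rather than an argument. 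There is no known Freiman-type theorem for $\Symb_{s-1}(\Z)$ (it is not a locally compact abelian group of the sort covered by the Ruzsa/Green--Ruzsa machinery), and the ``bracket-polynomial dictionary'' identification you invoke does not by itself produce the conclusion: it is simply not true, as a formal matter, that a small-doubling, approximately linearly indexed family of degree-$(s-1)$ bracket polynomial classes is automatically parameterised by a single bracket polynomial of bi-degree $(1,s-1)$; this is essentially a restatement of the goal, not a reduction.

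What the paper actually does is work inside the nilcharacters themselves, exploiting the fact that each $\chi_h$ is built from a bounded frequency set $\Xi_h$. The proof proceeds by (i) a secondary induction on the \emph{rank} of the nilcharacters; (ii) a \emph{sunflower decomposition} $\tilde\Xi_h=\Xi_*\cup\Xi'_h$ separating $h$-independent ``core'' frequencies from ``petal'' frequencies varying dissociatedly with $h$; (iii) a \emph{Furstenberg--Weiss} equidistribution argument, using the quantitative Leibman theorem of \cite{green-tao-nilratner}, showing that after this decomposition each top-order bracket monomial in $\chi_h$ contains at most one petal frequency; only then (iv) does the BSG + Freiman--Ruzsa machinery enter, applied to that single $\R/\Z$-valued petal frequency $\alpha_h$ (so one really is in the $\R/\Z$ setting of the $s=2$ argument, not in an abstract $\Symb_{s-1}(\Z)$); and finally (v) a semidirect-product construction realises the resulting bracket-linear-in-$h$ object as a bi-nilcharacter of bi-degree $(1,s-1)$. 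Your proposal collapses steps (ii)--(v) into a single hypothesised structure theorem; it is exactly to avoid needing such a theorem that the sunflower decomposition and Furstenberg--Weiss argument were invented, and they constitute the bulk of \cite{gtz-inverse}. So while your framing identifies the right shape of the problem, the central technical content is missing.
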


\emph{Remark.} Note that in the case $s = 1$ this is more-or-less precisely the outcome of the discussion we had above, in which the phase $\xi_h$ was shown to vary bracket-linearly and then exhibited as a  bi-nilcharacter coming from the Heisenberg group.

We refer to this operation of replacing the family of one-dimensional $(s-1)$-step nilcharacters $\chi_h(n)$ by a single ``bi-nilcharacter'' $\chi(h,n)$ of degree $(1,s-1)$ in $h,n$ as \emph{linearisation}.  Establishing this property is difficult, and occupies the bulk of \cite{gtz-inverse}.  The starting point for accomplishing this linearisation will be the top-degree portion of the approximate cocycle equation, Lemma \ref{gow}, or in other words Corollary \ref{approxa}.  In the converse direction, it is not difficult to show by an algebraic computation that if $\chi(h,n)$ is a bi-nilcharacter of bi-degree $(1,s-1)$ in $h,n$, then the one-dimensional nilcharacters $\chi_h(n) := \chi(h,n)$ obeys the conclusion of Corollary \ref{approxa}. The reader is invited to do this for the simple example of the polynomial phase \eqref{polyhn} with $(p,q)=(1,s-1)$.

To obtain linearisation from Corollary \ref{approxa} for a general value of $s \geq 3$ requires five additional ingredients.
\begin{enumerate}
\item A secondary induction on the ``rank'' of the nilcharacters being linearised.
\item A ``sunflower decomposition'' that regularises the frequencies involved into ``petal'' and ``core'' frequencies. Roughly speaking, the core frequencies do not depend on $h$ whilst the petal frequencies vary in a highly independent fashion with $h$. 
\item A ``Furstenberg-Weiss argument'', based ultimately on the quantitative equidistribution theory of nilsequences, that shows that every top order term in a nilcharacter has at most one petal (genuinely $h$-dependent) frequency. 
\item A further application of the quantitative equidistribution theory of nilsequences, together with additive combinatorics, to show that these petal frequencies (may be assumed to) vary bracket-linearly. 
\item An algebraic construction to model these objects, which vary bracket-linearly in $h$ and in a ``nil-fashion'' on $n$, by a bi-nilsequence $\chi(h,n)$ of bi-degree $(1,s-1)$.
\end{enumerate}

We now give a few further details for each of these (somewhat technical) ingredients in turn.\vspace{8pt}

(i) \emph{The notion of degree and rank.} The need for an induction on rank first arose in the $s=2$ case of linearisation in \cite{u4-inverse}, in which the (piecewise) nilcharacters $\chi_h$ took the form
$$ \chi_h(n) = e( \sum_{i=1}^{m_h} \alpha_{h,i} n \lfloor \beta_{h,i} n \rfloor + \gamma_{h,i} n^2 + \ldots ),$$
where the $\ldots$ denote $1$-step factors.  It turned out that one had to first fully linearise the ``rank $2$ quadratics'' $\alpha_{h,i} n \lfloor \beta_{h,i} n \rfloor$ before one could then linearise the ``rank $1$ quadratics'' $\gamma_{h,i} n^2$, because the process of linearising the former type of quadratic tended to generate error terms that would have to be absorbed into the latter type of quadratic.  A typical example of such a manipulation arises from the identity
\begin{equation}\label{alphab}
 e( \alpha n \lfloor \beta n \rfloor ) = e( - \beta n \lfloor \alpha n \rfloor ) e( \alpha \beta n^2 ) e( \{ \alpha n \} \{ \beta n \} )
 \end{equation}
which equates the rank $2$ quadratic $e( \alpha n \lfloor \beta n \rfloor )$ with the rank $2$ quadratic $e( -\beta n \lfloor \alpha n \rfloor )$ modulo rank $1$ quadratic and $1$-step errors.

In the higher step case, one would like to similarly organise various components of an $(s-1)$-step nilcharacter into components of different ranks.  If one pretends that a nilcharacter $\chi$ is built up of various bracket monomials of degree $(s-1)$, times lower order terms, then one can heuristically think of the rank of each monomial as the number of brackets involved in its definition, plus one. For instance, $e( \alpha n \lfloor \beta n \lfloor \gamma n^2 \lfloor \delta n \rfloor \rfloor \rfloor )$ is a degree $5$ bracket monomial with a rank of $4$.

One can formalise the notion of rank using the calculus of bracket polynomials, but the approach taken in \cite{gtz-inverse} is to abstract away the bracket polynomials and define rank purely within the formalism of nilcharacters.  This is done by a device similar (though not identical) to that used to define bi-nilcharacters of a given bi-degree.  Namely, to build an $(s-1)$-step nilcharacter $\chi$ of a given rank $r_0$, one creates a two-dimensional filtration $G_{(d,r)}$ on a nilpotent group $G$ for every given degree $d$ and rank $r$, with the nesting properties $G_{(d,r)} \supseteq G_{(d',r')}$ when $d' > d$ or $d'=d$ and $r'>r$, as well as the inclusions $[G_{(d,r)}, G_{(d',r')}] \subseteq G_{(d+d',r+r')}$ for all $d, r \geq 0$ and $G_{(d,0)} = G_{(d,1)}$, with the hypothesis that $G_{(s-1,r_0+1)}$ vanishes.  One then writes $\chi(n) = F(g(n)\Gamma)$ where $F$ obeys suitable Lipschitz and vertical character properties, and $g$ is a polynomial sequence with the property that the $i$-fold derivatives take values in $G_{(i,0)} = G_{(i,1)}$ for all $i \geq 0$.  For details, see \cite{gtz-inverse}.\vspace{8pt}

(ii) \emph{The sunflower decomposition}.  Suppose that we are dealing with the case $s = 3$ and that, for the sake of exposition, we have $\chi_h(n) = e(\alpha_h n \lfloor \beta_h n\rfloor)$. At this stage we have no information about how the frequencies $\alpha_h, \beta_h$ vary with $h$. It may be that $\alpha_h$ is roughly constant in $h$ and that $\beta_h$ is highly oscillatory in $h$. If this is the case we are actually quite happy, since then some understanding of the distribution of $\chi_{h_1}\chi_{h_2}\overline{\chi_{h_3}\chi_{h_4}}(n)$ as $h_1,h_2,h_3,h_4$ vary over additive quadruples is possible. More bothersome is the possibility of behaviour that is a mix of these two extremes, and the sunflower decomposition exists to rule this out. 

Suppose that in some more general setting the set of frequencies of $\chi_h$ is some set $\Xi_h$ of size $O(1)$. In the example just described we have $\Xi_h = \{\alpha_h, \beta_h\}$ but in higher-step settings these frequencies might come from a host of bracket expressions such as $e(\alpha_h n \lfloor \beta_h n\lfloor \gamma_h n\rfloor \rfloor)$ or $e(\alpha_h n \lfloor \beta_h n\rfloor \lfloor \gamma_h n\rfloor)$ or the product of several such terms. The aim of the sunflower decomposition is to replace the sets $\Xi_h$ by new sets
\begin{equation}\label{decomp} \tilde \Xi_h = \Xi_* \cup \Xi'_h,\end{equation} all these sets still having size $O(1)$. Every frequency in $\Xi_h$ is an $O(1)$-rational combination of those in $\tilde \Xi_h$, up to a small error. The ``core'' set $\Xi_*$ consists of frequencies which do not depend on $h$, whilst the ``petal'' sets $\Xi'_h$ depend on $h$ in a very dissociated manner: for most triples $h_1, h_2, h_3$ the frequencies in the union $\Xi_* \cup \Xi'_{h_1} \cup  \Xi'_{h_2}\cup  \Xi'_{h_3}$ do not approximately satisfy an $O(1)$-rational relation. 

We shall say nothing about the proof of the sunflower decomposition here, other than that it may be established by iterative refinement; if at some stage the requirements are not met by \eqref{decomp}, it is possible to add a new frequency to the core set and reduce the size of many of the petal sets $\Xi'_h$. Slightly implicitly, this argument may be read out of \cite[Section 7]{u4-inverse}, particularly Proposition 7.5. 

Once the sunflower decomposition has been established some work is required to express the original nilcharacter $\chi_h(n)$ in terms of objects involving the new sets of frequencies $\tilde \Xi_h$. Recall that the original frequencies $\Xi_h$ are $O(1)$-rational combinations of the $\tilde \Xi_h$, up to $O(1)$. In our work on $\GI(3)$ this was done explicitly using ``bracket quadratic identities'', the basic idea being that an object such as $e(\alpha n_1 \lfloor \beta n_2\rfloor)$ is multilinear up to lower-order terms. In the more general paper to come, these issues are instead dealt with in a more abstract fashion, using nilsequences.\vspace{8pt}

(iii) \emph{The Furstenberg-Weiss argument.} For simplicity let us suppose that $s = 3$ and imagine that, following step (ii), the top-order term of $\chi_h(n)$ is a product of terms such as $e(\alpha_h n \lfloor \beta_h n\rfloor)$, where the frequencies $\alpha_h, \beta_h$ belong to frequency sets $\Xi_h$ which have been decomposed as $\Xi_* \cup \Xi'_h$ according to the sunflower decomposition. The aim is to show that (after refining the set of $h$) we do not have $\alpha_h, \beta_h \in \Xi'_h$. That is to say, there are no terms with more than one petal frequency. Put another way, no more than one frequency  in any bracket monomial genuinely depends on $h$. 

The argument proceeds by studying the conclusion of Corollary \ref{approxa} using an argument of Furstenberg and Weiss.  For simplicity, let us just discuss a model case in which $s=3$ and each $\chi_h$ is essentially of the form $\chi_h(n) = e( \alpha_h n \lfloor \beta_h n \rfloor )$. This was already treated in detail in \cite[Lemma 7.3]{u4-inverse}.

\begin{lemma}[Furstenberg-Weiss argument, model case]  Suppose that for $ijk=123,124$, the six frequencies $\alpha_{h_i}, \beta_{h_i}, \alpha_{h_j}, \beta_{h_j}, \alpha_{h_k}, \beta_{h_k}$ are linearly independent in the sense that there is no non-trivial linear combination of these six frequencies with bounded integer coefficients that is equal to $O(1/N)$ modulo $1$. Then $\chi_{h_1} \chi_{h_2} \overline{\chi_{h_3}} \overline{\chi_{h_4}}$ has negligible mean, and more generally does not correlate with any $1$-step nilsequence.
\end{lemma}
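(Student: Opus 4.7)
The plan is to realise the product $F(n) := \chi_{h_1}(n)\chi_{h_2}(n)\overline{\chi_{h_3}(n)\chi_{h_4}(n)}$ as a Lipschitz function composed with a polynomial sequence on a product of four Heisenberg nilmanifolds, and then to apply the quantitative equidistribution theory of Green--Tao \cite{green-tao-nilratner} to force a horizontal character relation that contradicts the independence hypothesis. Writing each $\chi_{h_i}(n) = F_H(e_1^{\alpha_{h_i}n} e_2^{\beta_{h_i}n}\Gamma)$ as in Section \ref{sec3} (with the usual caveat that $F_H$ is only piecewise Lipschitz, which we suppress), the product $F$ is realised as $\tilde F(\tilde g(n)\tilde\Gamma)$ on $(G_H/\Gamma)^4$, where $\tilde F := F_H\otimes F_H\otimes\overline{F_H}\otimes\overline{F_H}$ carries vertical character $(-1,-1,1,1)$ along the four central circles --- crucially nontrivial in every component, so that $\int \tilde F = 0$. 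To accommodate correlation with a general $1$-step nilsequence $\psi$, I would Fourier-decompose $\psi$ into characters $e(\xi n)$, adjoin an auxiliary circle factor, and reduce to showing that $|\E_{n \in [N]} F(n) e(-\xi n)|$ is negligible uniformly in $\xi \in \R$.

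Non-negligible mean of the twisted product would, by the quantitative Leibman theorem for polynomial sequences on nilmanifolds, produce a non-trivial horizontal character on $(G_H/\Gamma)^4 \times \R/\Z$, i.e.\ bounded integers $(k_i, l_i)_{i=1}^4$ and $m$, not all zero, satisfying
\[ \sum_{i=1}^{4}(k_i \alpha_{h_i} + l_i \beta_{h_i}) + m\xi \;\equiv\; O(1/N) \pmod 1. \]
The Furstenberg--Weiss-style enhancement, implemented in the model case of \cite[Lemma 7.3]{u4-inverse}, is to exploit the nontriviality of the vertical character in each of the four Heisenberg factors: an iterative Fubini-type argument integrating out one factor at a time shows that if $\tilde F$ retains a nontrivial vertical character after projecting out a given factor, then the horizontal character cannot ignore that factor, which in this symmetric setting forces the relation to decouple into one involving strictly fewer than all eight frequencies --- concretely, one supported on the indices $\{h_1, h_2, h_3\}$ or on $\{h_1, h_2, h_4\}$.

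Once the relation is confined to six frequencies of one of these triples, the case $m = 0$ immediately contradicts the independence hypothesis for the corresponding triple, while for $m \neq 0$ the relation determines $\xi$ to within $O(1/N)$ in terms of the remaining frequencies. The contribution to the Fourier expansion of the Lipschitz $1$-step nilsequence $\psi$ from such a narrow band of $\xi$ is then negligible, even after summing over the bounded collection of admissible $(k_i, l_i, m)$. In this way one obtains the desired bound not just on the mean of $F$ but on its correlation with every $1$-step nilsequence.

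The step I expect to be the main obstacle is the Furstenberg--Weiss coupling itself: extracting from the vertical-character structure the conclusion that the horizontal character can be taken to couple only a proper subset of the four factors. The bare equidistribution theorem supplies a horizontal character but not this refinement; one must carry out an inductive integration over the Heisenberg factors, using equidistribution within each subnilmanifold and verifying that the leftover vertical character prevents the integral from cancelling. Making this rigorous with the vector-valued Lipschitz refinements alluded to at the end of Section \ref{sec3}, rather than the scalar placeholders used above for exposition, accounts for much of the technical work.
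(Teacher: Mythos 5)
Your setup matches the paper's: realise each $\chi_{h_j}$ on its own copy of the Heisenberg nilmanifold, pass to the product $(G_1/\Gamma_1)\times\cdots\times(G_4/\Gamma_4)$, observe that $\tilde F = F_1\otimes F_2\otimes\overline{F_3}\otimes\overline{F_4}$ has nontrivial vertical character in every factor hence mean zero on the full product, and invoke the quantitative Leibman/equidistribution theorem of \cite{green-tao-nilratner}. The Fourier decomposition of the auxiliary $1$-step nilsequence into characters $e(\xi n)$ is also fine.

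The gap is in the step you yourself flag as the main obstacle. You extract a single horizontal-character relation $\sum_{i=1}^4(k_i\alpha_{h_i}+l_i\beta_{h_i})+m\xi\equiv O(1/N)\pmod 1$ and then assert that a ``Furstenberg--Weiss coupling'' forces this relation to decouple onto one of the triples $\{h_1,h_2,h_3\}$ or $\{h_1,h_2,h_4\}$, at which point the independence hypothesis applies. But the independence hypothesis only forbids relations among the six frequencies of a single triple; it says nothing about a relation such as $\alpha_{h_1}+\beta_{h_2}-\alpha_{h_3}-\beta_{h_4}\equiv O(1/N)$, which touches all four factors and is perfectly consistent with both hypotheses. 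Nothing you describe rules this out, and indeed the sentence ``the horizontal character cannot ignore that factor, which \ldots forces the relation to decouple into one involving strictly fewer \ldots frequencies'' is internally contradictory. The equidistribution theorem gives you one horizontal character annihilated by the orbit; there is no iterative mechanism by which you can trim it to a character supported on a chosen sub-product.

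The paper avoids this entirely by not trying to decouple the horizontal character. It argues about the closure $H\leq G_1\times\cdots\times G_4$ on which the orbit does equidistribute. Independence for $\{1,2,3\}$ forces the projection of $H$ to $G_1\times G_2\times G_3$ to be \emph{onto} (otherwise the projected orbit would fail to equidistribute, producing a forbidden six-frequency relation), and likewise independence for $\{1,2,4\}$ makes the projection to $G_1\times G_2\times G_4$ onto. Taking commutators of an element of $H$ of the form $(g_1,\id,\id,\ast)$ with one of the form $(g_1',\id,\ast,\id)$ shows $[G_1,G_1]\times\{\id\}\times\{\id\}\times\{\id\}\subseteq H$. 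Fubini over this central circle of $G_1$, combined with the nontrivial vertical character of $F_1$, forces $\int_{H/\Sigma}F_1\otimes F_2\otimes\overline{F_3}\otimes\overline{F_4}=0$, contradicting the assumed non-negligible mean. Note that this argument is indifferent to whether some horizontal character of $H$ couples all four factors; it only needs the two surjectivity statements and the commutator. You should replace your decoupling step with this closure-and-commutator argument.
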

We remark that we will find ourselves in exactly this situation if there are many $h$ such that $\chi_h(n)$ contains a petal-petal combination. The conclusion of this lemma then contradicts Corollary \ref{approxa}.
\begin{proof}(Sketch)  For notational simplicity we just sketch the claim that the mean 
\begin{equation}\label{enn}
\E_{n \in [N]} \chi_{h_1} \chi_{h_2} \overline{\chi_{h_3}} \overline{\chi_{h_4}}(n)
\end{equation}
is negligible.  We write each $\chi_{h_j}(n)$ as a nilcharacter
$$ \chi_{h_j}(n) = F_j( e_{j,1}^{\alpha_{h_j} n} e_{j,2}^{\beta_{h_j} n} \hbox{ mod } \Gamma_j)$$
where $e_{j,1}, e_{j,2}$ generate copies $G_j$ of the Heisenberg group with corresponding discrete subgroups $\Gamma_j$, and $F_j$ is a suitable function.  The mean \eqref{enn} is then controlled by the equidistribution of the orbit
$$ (e_{j,1}^{\alpha_{h_j} n} e_{j,2}^{\beta_{h_j} n} \hbox{ mod } \Gamma_j)_{j=1}^4$$
in a product $(G_1/\Gamma_1) \times \ldots \times (G_4/\Gamma_4)$ of four Heisenberg nilmanifolds.

An application of a quantitative version of Leibman's theorem \cite{leibman-poly} on equidistribution of polynomial orbits in nilmanifolds, established by the first two authors in \cite{green-tao-nilratner}, tells us (roughly speaking) that this orbit is equidistributed on a subnilmanifold $H/\Sigma$ of $(G_1/\Gamma_1) \times \ldots \times (G_4/\Gamma_4)$, where $H$ is a closed subgroup of $G_1 \times \ldots \times G_4$; the mean \eqref{enn} is then essentially the integral of the tensor product $F_1 \otimes F_2 \otimes \overline{F_3} \otimes \overline{F_4}$ on this subnilmanifold.  The linear independence of the frequencies $\alpha_{h_i}, \beta_{h_i}, \alpha_{h_j}, \beta_{h_j}, \alpha_{h_k}, \beta_{h_k}$ for $ijk=123$ can be used to show that the projection from $H$ to $G_1 \times G_2 \times G_3$ is surjective; similarly, the same hypothesis for $ijk=124$ can be used to show that the projection from $H$ to $G_1 \times G_2 \times G_4$ is surjective.  Taking commutators, one then concludes that $H$ contains $[G_1,G_1] \times \{\id\} \times \{\id\} \times \{\id\}$ as a subgroup.  From this and the non-trivial oscillation of $F_1$ we see that $F_1 \otimes F_2 \otimes \overline{F_3} \otimes \overline{F_4}$ has mean zero on $H/\Sigma$, and the claim follows.
\end{proof}
The above argument may be used to rule out the possibility that $\chi_h(n) = e(\alpha_h n \lfloor \beta_h n\rfloor)$ with both $\alpha_h$ and $\beta_h$ being petal frequencies, since in this case almost all additive quadruples $h_1+ h_2 = h_3 + h_4$ will satisfy the hypotheses of the lemma, leading to a contradiction of Corollary \ref{approxa}. A very similar, but more notationally intensive, argument may be used to rule out a more general possibility: that $\chi_h(n)$, which could in general be a product of \emph{many} terms like $e(\alpha_h n\lfloor\beta_h n\rfloor)$, contains one such term with two petal frequencies.\vspace{8pt}

(iv) \emph{Additive Combinatorics.} Let us persist with the model setting in which $s = 3$ and $\chi_h(n)$ is a product of terms of the form $e(\alpha_h n\lfloor\beta_h n\rfloor)$. As a consequence of part (iii), we may assume that in each such term only one of $\alpha_h, \beta_h$ genuinely depends on $h$ (i.e. is a petal frequency), the other frequency being core. A simple model to consider is that in which $\chi_h(n) = e(\alpha_h n\lfloor\beta n\rfloor)$. 

We then re-examine Corollary \ref{approxa} in the light of this new structural information on $\chi_h(n)$. By a further argument of Furstenberg-Weiss type, very similar to the above, one may show that $\alpha_h$ satisfies a relation of type \eqref{xha}. Applying the same additive-combinatorial machinery (the Balog-Szemer\'edi-Gowers theorem and Fre\u{\i}man's theorem) we may replace $\alpha_h$ by a bracket-linear object as in \eqref{xih-linear}. Details of this type of argument in the case of $\GI(3)$ may be found in \cite[Section 8]{u4-inverse}.\vspace{8pt}

(v) \emph{Constructing a nilobject.} We have, at this point, shown that the top-order terms of $\chi_h(n)$ vary in a somewhat ``rigid'' or algebraic way -- more specifically, the $h$-dependence is bracket-linear. The remaining task in the ``linearisation'' part of the argument is to identify these top-order terms as coming from a bi-nilsequence $\chi(h,n)$. In previous works on the inverse conjectures such as that of the first two authors on the $U^3$-norm \cite{green-tao-u3inverse} and the authors' treatment of the $U^4$-norm \cite{u4-inverse} this ``nilobject'' was constructed in a rather \emph{ad hoc} manner. In the former paper suitable products of Heisenberg nilmanifolds were exhibited, whilst in the latter the free $3$-step nilpotent group on a suitable number of generators was considered. We also remark that, in both of these works, the nilobject was constructed at the very last step of the argument, rather than prior to the symmetry argument (to be discussed in the next section) as here. In our longer paper \cite{gtz-inverse} we introduce a more systematic construction based on a semidirect product. Rather than describe this in any kind of generality we merely outline an example of the construction. Suppose that $\alpha_h := \gamma \{\delta h\}$ and that we know, for fixed $h$, how to construct the nilcharacter $\chi_h(n) =  e(\alpha_h n\lfloor\beta n\rfloor)$. We do, of course, since it comes from a Heisenberg example: however the description that follows works in much greater generality. Then we show how to realise $\chi_h(n)$ as a bi-nilsequence.

The reader might briefly recall, at this point, the construction of $\chi_h(n)$ as a nilcharacter on the Heisenberg as given in \S \ref{sec3}, namely
\[ \chi_h(n) = F(g_h(n)\Gamma)\] with $g_h(n) = e_1^{\alpha_h n} e_2^{\beta n}$. We note once more that $F$ is not Lipschitz, and so $\chi_h(n)$ is not quite a true nilcharacter, but we shall pretend that it is for the purposes of this announcement. 
\newcommand\petal{\operatorname{petal}}

We turn now to the interpretation of $\chi_h(n)$ as a bi-nilsequence in $h$ and $n$. 
The first task is to identify a subgroup $G_{\petal}$ of the Heisenberg group $G$ representing that part of $G$ that is ``influenced by'' the petal frequency $\alpha_h$. In our setting this is very easy; simply take $G_{\petal}$
to be the subgroup of $G$ generated by $e_1$ and $[e_1, e_2]$.
Note that $G_{\petal}$ is abelian and normal in $G$. These features are quite general and hinge on the fact that there is only \emph{one} petal frequency in $\chi_h(n)$. Of course, it was precisely to achieve this that we worked so hard in (iii) above. In particular $G$ acts on $G_{\petal}$ by conjugation and we may form the semidirect product $G \ltimes G_{\petal}$, defining multiplication by
\[ (g, g_1)\cdot (g', g'_1) = (gg', g_1^{g'} g'_1),\] where $a^b := b^{-1} a b$ denotes conjugation.

Now consider the action $\rho$ of $\R$ on $G \ltimes G_{\petal}$ defined by 
\[ \rho(t)(g, g_1) := (g g_1^t, g_1).\] We may form a further semidirect product
\[ \tilde G := \R \ltimes_{\rho} (G \ltimes G_{\petal}),\] in which the product operation is defined by
\[ (t, (g, g_1)) \cdot (t', (g', g'_1)) = (t + t', \rho(t')(g, g_1) \cdot (g', g'_1)).\]
$\tilde G$ is a Lie group; indeed, one easily verifies that it is $3$-step nilpotent. Inside $\tilde G$ we take the lattice 
\[ \tilde \Gamma := \Z \ltimes_{\rho} (\Gamma \ltimes \Gamma_{\petal}),\] where $\Gamma_{\petal} := \Gamma \cap G_{\petal}$.

We will construct $\chi_h(n)$ as a bi-nilsequence $\tilde F(\tilde g(h,n) \tilde \Gamma)$ for suitable $\tilde F : \tilde G/\tilde\Gamma \rightarrow \C$ and an appropriate polynomial sequence $\tilde g : \Z^2 \rightarrow \tilde G$. For $\tilde g$, take
\[ \tilde g(h, n) := (0, (e_2^{\beta n}, e_1^{\gamma n})) \cdot (\delta h, (\id, \id))\]
and observe that
\begin{align*} \tilde g(h,n)\tilde\Gamma & = (0, (e_2^{\beta n}, e_1^{\gamma n})) \cdot (\{\delta h\} , (\id, \id)) \tilde \Gamma \\ & = (\{\delta h\}, (e_2^{\beta n} e_1^{\{\delta h\}\gamma n}, e_1^{\gamma n}))\tilde \Gamma.
\end{align*}
Finally, take $\tilde F : \tilde G/\tilde \Gamma \rightarrow \C$ to be the function defined by
\[ \tilde F((t, (g, g'))\tilde\Gamma) = F(g)\] whenever $0 \leq t < 1$ and $g$ lies in the fundamental domain of $G/\Gamma$.  By exactly the same computation as for the Heisenberg group we have 
\[ \tilde F(\tilde g(h,n)\tilde\Gamma) = e(\gamma\{\delta h\}  n \lfloor\beta n\rfloor) = \chi_h(n),\] which is exactly what we wanted.

This completes the discussion of point (v) in the model case of a rather clean and simple collection of nilcharacters $\chi_h(n)$ on the Heisenberg group. Even here, we have omitted details: for example, one must carefully place a filtration on $\tilde G$ and confirm that the new bi-nilsequence $\chi(h,n)$ has the claimed bi-degree, namely $(1,2)$ in this case (note, however, that we have not even properly \emph{defined} bi-degree in this announcement). The difficulties involved in doing this, and in generalising the semidirect product construction just described, are largely notational.\vspace{8pt}

With a brief discussion of each of the five points (i) to (v) now completed, we have concluded our sketch proof of Theorem \ref{linear}.

\section{Symmetrisation}

We turn now to the final part of the argument. Let us begin with a summary of our current position, which is the result of applying the observation \eqref{enhf-char} and the rather substantial Theorem \ref{linear}. Together, these tell us that if $f: [N] \to \D$ is a function with large $U^{s+1}[N]$-norm then there is a bi-nilcharacter $\chi(h,n)$ of bi-degree $(1,s-1)$ in $h, n$ such that for many $h \in [-N,N]$, $\Delta_h f$ correlates with $\chi(h,\cdot)$ modulo $(s-2)$-step errors.  We would like to ``integrate'' $\chi(h,n)$ by expressing it in the form
$$ \chi(h,n) = \Delta_h \theta(n)  \cdot\psi_h(n),$$
for some $s$-step nilcharacter $\theta$ and some $(s-2)$-step nilcharacters $\psi_h$.  

To see what is necessary to achieve this, let us proceed heuristically as at the start of \S \ref{approx-lin-section} and suppose that $\chi(h,n) = \Delta_h \theta(n)$. Then we have
\[  \chi(h, n+k)\overline{\chi(h,n)} = \Delta_k \Delta_h \theta(n) =\Delta_h \Delta_k \theta(n) = \chi(k, n+h)\overline{\chi(k,n)}.\] This ``symmetry'' relation, which will certainly not be satisfied by an arbitrary binilcharacter $\chi(h,n)$, suggests that, even in our rather weaker setting, we must obtain further information about $\chi$ before we can complete our task.

For instance, if one had 
$$ \chi(h,n) \approx e( \alpha h \lfloor \beta n \rfloor )$$
(where $\approx$ informally denotes equivalence up to lower order terms)
then there does not appear to be any reasonable candidate for the antiderivative $\theta$, whereas if
\[ \chi(h,n) \approx e(\alpha h \lfloor \beta n\rfloor + \alpha n \lfloor \beta h \rfloor)\] then $\chi(h,n) = \Delta_h \theta(n)$ up to lower order terms, where $\theta(n) := e(\alpha n \lfloor \beta n\rfloor)$.
  The obstruction here is analogous to the basic fact in de Rham cohomology that in order for a $1$-form $\omega$ to be exact (i.e. to be the derivative $\omega = df$ of a scalar function), it is first necessary that it be closed (i.e. $d\omega = 0$). 

The need for this symmetry, and the means for obtaining it, was first addressed in \cite{green-tao-u3inverse,sam} as part of the proof of $\GI(2)$. A somewhat different argument of this nature later appeared in \cite{u4-inverse} as part of the proof of $\GI(3)$.  In the former case, this symmetry was obtained by a Cauchy-Schwarz argument that was similar (but subtly different) from the one used to establish Lemma \ref{gow}.  In the latter case, we inspected the lower order terms of Lemma \ref{gow} and we do the same here. In our present setting, this lemma implies that
\begin{equation}\label{correlate-2var} \E_{n \in [N]} \chi(h_1,n)\chi(h_2, n + h_1 - h_4) \chi(h_3, n)\chi(h_4, n + h_1 - h_4) \J_{\lo}(h_i, n) \gg 1\end{equation} for many additive quadruples $h_1 + h_2 = h_3 + h_4$.
Here, and in everything that follows, we use the symbol $\J()$ to denote ``junk terms''. Here, these are terms of ``lower order'' (hence the subscript $\lo$); later on $\J$ will also be allowed to include terms, depending only on some strict subset of the variables, that are destined to be annihilated by applications of the Cauchy-Schwarz inequality. We will denote these by a subscript $\cs$.

Let us pause to recall the remarks immediately following the statement of Lemma \ref{gow} to the effect that very little was ``lost'' in proving that lemma. It should not, therefore, come as a surprise that \eqref{correlate-2var} is \emph{in principle} enough to proceed; however, actually making use of this observation is surprisingly tricky.

Let us specialise to the case $s = 4$ and for the sake of this discussion suppose that $\chi(h,n) = e(T(h,n,n,n))$, where $T : [N]^4 \rightarrow \R/\Z$ is to be thought of as a ``bracket linear form'' such as 
\[ T(n_1, n_2, n_3,n_4) = \alpha n_1 \lfloor \beta n_2 \lfloor \gamma n_3 \lfloor\delta n_4\rfloor \rfloor \rfloor.\] 

Since $T$ only appears in the expression $T(h,n,n,n)$ we may assume that $T$ is already symmetric in the last three variables, by replacing $T(n_1, n_2, n_3,n_4)$ with 
\[ \frac{1}{6}\sum_{\pi \in S_3} T(n_1, n_{\pi(2)}, n_{\pi(3)}, n_{\pi(4)}).\] We need only establish, then, some symmetry in the first two variables of $T$.

Substituting into \eqref{correlate-2var} and parametrising additive quadruples as $h_1 = h$, $h_2 = h+ a + b$, $h_3 = h+ a$, $h_4 = h + b$ we obtain
\begin{align*} \E_{n,h,a,b} e ( & T(h,n,n,n) + T(h + a + b, n - b, n - b,n-b) - \\ & T(h + a, n,n,n) - T(h + b, n - b, n - b,n-b)) \J_{\lo}(\cdot ) \gg 1.\end{align*}	
If $T$ were genuinely quartilinear this would collapse (using the symmetry in the last three variables) to give 
\begin{equation}\label{nab-bias} \E_{n,a,b}e(- 3 T(a,b,n,n)) \J_{\lo}(\cdot )\gg 1,\end{equation} where $\J_{\lo}()$ is only \emph{linear} in $n$. Of course, $T$ is \emph{not} genuinely quartilinear but rather ``bracket quartilinear''. In practice this means that $T$ is quartilinear ``up to lower order terms'', a phenomenon best understood, but perhaps harder to explain in a brief overview, by thinking of $\chi(h,n)$ as a nilobject rather than as a bracket object.  After formalising this approximate quartilinearity, one may eventually assert, in place of \eqref{nab-bias}, a statement of the form
\begin{equation}  \E_{n,a,b} e(-3 T(a,b,n,n)) \J_{\lo, \cs}( \cdot) \gg 1,\end{equation}
wherethe subscript $\cs$ in $\J_{\lo, \cs}( \cdot)$ implies that this error term is not necessarily of lower order (degree $1$) in $n$, but the non-linear terms depend only on one of the variables $a,b$ and will at some later point be removed using the Cauchy-Schwarz inequality.
Applying Cauchy-Schwarz to this yields
\[ \E_{n,a,b,b'} e(-3T(a,b,n,n) + 3 T(a,b',n,n)) \J_{\lo, \cs} (\cdot ) \gg 1.\]
Now the non-linear terms in  $\J_{\lo, \cs}(\cdot)$  are independent of $a$, and depend only on one of the variables $b,b'$.
Substituting $c:= a + b + b'$ gives
\[ \E_{n,c,b,b'} e(-3T(c - b - b', b, n,n) + 3 T(c - b - b', b', n,n)) \J_{\lo, \cs}(\cdot ) \gg 1.\] 
In particular, there is some value of $c$ such that
\[ \E_{n,b,b'} e(-3T(c - b - b', b, n,n) + 3 T(c - b - b', b', n,n)) \J_{\lo, \cs}(\cdot ) \gg 1.\]
Using multilinearity (modulo lower order terms) and absorbing any terms depending on only one of $b,b'$ into the junk term $\J( )$ we obtain
\[ \E_{n,b,b'} e(3T(b',b, n,n) - 3T(b,b',n,n)) \J_{\lo, \cs}( \cdot) \gg 1.\] 
At this point we have a statement that certainly seems to be asserting at least some kind of symmetry in the first two variables of $T$, which is of course our eventual goal. 

Further manipulations are required to turn it into something usable. Write 
\[ \psi(b,b',n,n) := 3T(b',b,n,n) - 3T(b,b',n,n);\]
thus
\begin{equation}\label{eq71} \E_{n,b,b'} e(\psi(b,b',n,n)) \J_{\lo,\cs}(\cdot) \gg 1.\end{equation}
The junk term $\J(\cdot)$ is comprised of terms $\J_{\lo}$ of lower order in $n$, and also of terms $\J_{\cs}$ depending on only one of the variables $b,b'$. By two applications of the Cauchy-Schwarz inequality we may eliminate these latter terms, obtaining
\begin{align*} \E_{n,b_1,b'_1, b_2, b'_2}  e(\psi(b_1, b'_1,n,n) - &\psi(b_2, b'_1,n,n) - \\ & \psi(b_1, b'_2,n,n) + \psi(b'_1, b'_2, n,n)) \J_{\lo}(\cdot ) \gg 1,\end{align*} 
where now the junk term $\J_{\lo}$ consists only of terms that are of linear nature in $n$. In particular, on average in $b_1,b_1',b_2,b_2'$, the Gowers $U^2$-norm of 
\[
 e(\psi(b_1, b'_1,n,n) - \psi(b_2, b'_1,n,n) -  \psi(b_1, b'_2,n,n) + \psi(b'_1, b'_2, n,n))
\]
is large. Writing this out in full and using the fact that $\psi$ is quartilinear up to lower order terms implies that
\[
e(2\psi(b_1, b_1', h_1, h_2) -2\psi(b_2, b_1', h_1, h_2) - 2\psi(b_1, b_2', h_1, h_2) +2\psi(b_1, b_2', h_1, h_2)) 
\]
correlates with a lower-order object.
By pigeonhole there is some choice of $b_2, b'_2$ such that the expectation over the remaining variables $h_1,h_2, b_1, b'_1$ is still $\gg 1$. For these fixed $b_2,b'_2$ the terms involving $b_2, b'_2$ are of lower order in $b_1, b'_1, h_1,h_2$, as a result of which we conclude that
\[ e(2\psi(b, b', h_1, h_2)) = e(6 T(b',b,h_1, h_2) - 6T(b,b',h_1, h_2))\] 
correlates with a lower-order object.

This expresses a certain symmetry of $T(n_1,n_2,n_3,n_4)$ in the first two variables, and this is enough to complete the ``integration'' of $\chi(h,n)$ and hence the proof of the inverse conjecture $\GI(s)$.

\providecommand{\bysame}{\leavevmode\hbox to3em{\hrulefill}\thinspace}

\end{document}